\newcommand{\ZZ}{\mathbb{Z}}
\newcommand{\RR}{\mathbb{R}}
\newcommand{\NN}{\mathbb{N}}
\newcommand{\CC}{\mathbb{C}}
\newcommand{\rank}{{\mathop{\rm rank\,}\nolimits}}
\newcommand{\diag}{{\mathop{\rm diag\,}\nolimits}}
\newtheorem{theorem}{Theorem}[section]
\newtheorem{lem}[theorem]{Lemma}
\newtheorem{rem}[theorem]{Remark}
\newenvironment{proof}
{\medskip \noindent{\bf Proof:} \hspace{0.03cm}}{{$\Box$}}
\journal{Linear Algebra and Its Applications}
\begin{document}

\begin{frontmatter}
  
\title{SVD update methods for large matrices and
  applications\tnoteref{ded,fund}}

\tnotetext[ded]{Dedicated to Mariano Gasca on the occasion of his 75th
birthday with friendship and gratitude for long years of support and fruitful
collaboration.}
\tnotetext[fund]{This work was partially supported by the 
Spanish Research Grant MTM2015-65433-P (MINECO/FEDER), by Gobierno de
Arag\'on and Fondo Social Europeo.}

\author[jmp]{Juan Manuel Pe\~na}
\ead{jmpena@unizar.es}
\author[ts]{Tomas Sauer}
\ead{Tomas.Sauer@uni-passau.de}
\address[jmp]{Depto. Matem\'{a}tica Aplicada,
    Fac. Ciencias, Universidad de Zaragoza, E--50009 Zaragoza,
    SPAIN.}
\address[ts]{Lehrstuhl Mathematik mit Schwerpunkt Digitale
    Bildverarbeitung \& FORWISS, Universit\"at Passau, Fraunhofer IIS
    Reserch Group for Knowledge Based Image Processing, Innstr. 43,
    94032 Passau, GERMANY.}
  

\begin{abstract}
  We consider the problem of updating the SVD when augmenting a ``tall
  thin'' matrix, i.e., a rectangular matrix $A \in \RR^{m \times n}$ with $m
  \gg n$. Supposing that an SVD of $A$ is already known, and given a
  matrix $B \in \RR^{m \times n'}$, we derive an efficient method to 
  compute and efficiently store the SVD of the augmented matrix $[ A B
  ] \in \RR^{m \times (n+n')}$. This is an important tool for two types
  of applications: in the context of principal component analysis, the
  dominant left singular vectors provided by this decomposition form
  an orthonormal basis for the best linear subspace of a given
  dimension, while from the right singular vectors one can extract an
  orthonormal basis of the kernel of the matrix.
  We also describe two concrete applications of these concepts which
  motivated the development of our method and to which it is very well
  adapted.
\end{abstract}

\begin{keyword}
  SVD \sep augmented matrix \sep PCA \sep Prony's problem

  \MSC 65F30
\end{keyword}

\end{frontmatter}
  
\section{Introduction}
The \emph{singular value decomposition} $A = U \Sigma V^T$ of a
matrix $A \in \RR^{m \times n}$ is a useful and important tool in many
applications and there exist algorithms and even toolboxes to perform
this task numerically. Indeed, the
matrices $U$, $V$, obtained in the decomposition give valuable
information about $A$ in a compressed way: the \emph{singular
  vectors}, the columns of the matrix $U$, give best low
dimensional approximations of the subspace generated by the columns of
$A$, while the matrix $V$ reliably detects the \emph{kernel} of the
matrix $A$ and therefore is the method of choice for numerical rank
detection, see \cite{eaton09:_gnu_octav,GolubvanLoan96}.

In many applications, specifically in video processing or in the
multivariate versions of Prony's method, the full matrix $A$ is not
known from the beginning,
but is built by successively adding columns or blocks of
columns; these added blocks can correspond to new measurements or can just
be determined by the algorithm itself. What these applications have
in common is that the columns of the matrices are large, i.e., $m
\gg n$. This calls for reliable and efficient \emph{update algorithms}
for a matrix whose SVD is already known. Such an algorithm, due to
Businger \cite{businger70:_updat}, is described in \cite{Bjoerck96},
however this algorithm, more precisely its transposed version to add
columns, then assumes that $m \le n$ which means that it adds
mostly redundant columns. In \cite{brand06:_fast}, Brand more recently
gave a fast algorithm to update few dominant singular values of an
augmented matrix which was used, for example, to perform background
elimination in videos
\cite{rodriguez16:_increm_princ_compon_pursuit_video_backg_model}. Brand's
method is based on an efficient way to perform rank $1$ modifications, see
\cite[Sections~3 and 4.1]{brand06:_fast}, hence it would proceed columnwise to
add a full block.
In this paper, we give an algorithm
that adds the columns simultaneously to a ``tall and thin'' matrix,
but always computes the \emph{full} SVD in an efficient way, and point
out the connections to PCA based video analysis and Prony's method. 

In the latter, 
the main numerical problem in the algorithms presented
in \cite{Sauer17_Prony,Sauer18:_prony} is to determine reliably the
nullspaces 
of sequences of matrices that are generated by successively adding
blocks of columns. Adding these columns corresponds to extending a
symmetric H--basis for an ideal, and treating them in a symmetric way
and not attaching them in some order is of great importance for the
numerical performance of such algorithms.
In other words, we have to determine the nullspaces of
sequences
$$
B_0, \quad [ B_0 \, B_1 ], \quad [ B_0 \, B_1 \, B_2 ], \dots,
$$
where, depending on the algorithm used, $B_j$ can be a single column
or a block of several columns. Clearly, the rank of these matrices is
increasing which is, however, not captured by a naive application of
\texttt{Matlab}'s \texttt{rank} command to the augmented
matrices. Since numerical rank computations are usually based on a
singular value decomposition (SVD) of the matrix, we aim for a method
which uses an already existing SVD of a given matrix $A_k$ to compute
the SVD of the column augmented $A_{k+1} = [ A_k \, B_k ]$ in an
efficient, numerically stable and reliable way.

Though the concrete method we develop and investigate here is new, the
problem itself has been considered before. Indeed,
Updating methods for rank revealing factorizations have been
considered by Stewart \cite{stewart93:_updat} in a very similar
context, namely for the \emph{MUSIC} algorithm
\cite{schmidt86:_multip} that solves Prony's problem in one variable
in the context of multisource radar signal processing. Other approaches for
updating SVD and QR algorithm can be found in \cite{bunch78:_updat}
and \cite{daniel76:_reort_gram_schmid_qr}, respectively. Incremental
methods for dominant singular subspaces were also considered in
\cite{baker12:_low_rank}, where only some dominant singular vectors
were computed. In contrast to that, our approach aims to always
compute the full \emph{thin} SVD of the matrix which is especially
needed for kernel computations.

The layout of the paper is as follows. In Section 2 we present our
method to update the SVD and analyze its computational cost. Section 3
presents a corresponding thresholding strategy, which is designed
ensure that increasing ranks are detected properly.
Section 4 presents two applications for which the augmented SVD
method is very well suited: to Principal Component Analysis of videos
in Subsection 4.3 and to the already mentioned solution of Prony's 
problem in several variables, see Subsection 4.4.

\section{Updating decompositions -- idea and details}
\label{sec:Upd}
We are considering processes that determine matrices $A_k \in \RR^{d
  \times n_k}$, $k=1,2,\dots$, by the iterative block column extension
\begin{equation}
  \label{eq:UpdateSetting}
  A_{k+1} = \left[ A_k \, B_k \right], \qquad B_k \in \RR^{d \times
    m_k}, \qquad m_k := n_{k+1} - n_k,  
\end{equation}
where $n_k \le d$, usually $n_k \ll d$,
and want to compute a rank revealing decomposition or an SVD for $A_{k+1}$
in an efficient and numerically stable way from that of
$A_k$.

We begin with the SVD and adapt an idea to our needs which is
referenced in \cite{Bjoerck96}, as Businger's method
\cite{businger70:_updat}.
The exposition in \cite{Bjoerck96}, however, extends a matrix with more rows
than columns by adding a further row and it is mentioned in passing
that adding of columns can be done by transposition. Then, however,
the matrix should have more columns than rows which is not the case in
our situation. Nevertheless, the basic idea can be adapted.

To that end, we assume that we already computed a decomposition
\begin{equation}
  \label{eq:AkSVD}
  A_k = U_k \Sigma_k V_k^T, \qquad \rank A_k =: r_k \le n_k \le d,
\end{equation}
with orthogonal matrices
\begin{equation}
  \label{eq:AkSVDUV}
  U_k \in \RR^{d \times d}, \qquad V_k \in \RR^{n_k \times n_k},
\end{equation}
and the diagonal matrix
\begin{equation}
  \label{eq:AkSVDSigma}
  \Sigma_k = \left[
    \begin{array}{cc}
      \Sigma_k' & 0_{r_k \times (n_k - r_k)} \\
      0_{(d-r_k) \times r_k} & 0_{(d-r_k )\times (n_k - r_k)}
    \end{array}
  \right] \in \RR^{d \times n_k}, \qquad \Sigma_k' \in \RR^{r_k \times r_k},
\end{equation}
where $\Sigma_k'$ has strictly positive diagonal values.

\begin{rem}
  Due to \eqref{eq:AkSVDUV}, the factorization is formally \emph{not}
  a ``slim'' or ``economic'' decomposition of $A_k$. Such a
  decomposition would be of the form
  $$
  A_k = U \Sigma V^T, \qquad U \in \RR^{d \times r_k}, \, \Sigma \in
  \RR^{r_k \times r_k}, \, V \in \RR^{r_k \times n_k},
  $$
  with all diagonal elements of $\sigma$ being positive.

  Note,
  however, that in \eqref{eq:AkSVD} the last $d-r_k$ columns of $U$
  and the last $n_k - r_k$ columns ov $V$ are irrelevant for the
  validity of the decomposition, hence it is not unique. We will later
  describe how to 
  represent \emph{one} such decomposition with a memory effort that only
  exceeds that of a thin representation by $r_k (r_k + n_k)$
  elements. This is negligible in the case when $d \gg n_k$ and has
  the advantage that we always compute a full orthonormal basis of the
  kernel of $A_k$, which was motivated by its importance for the Prony
  application.
\end{rem}

\noindent
In what follows, we first deduce the updating method in a general
fashion and give the numerically efficient implementations of the
crucial steps afterwards. To that end, we define in a
straightforward way, 
$$
X := U_k^T A_{k+1} \left[
  \begin{array}{cc}
    V_k & 0 \\ 0 & I
  \end{array}
\right] = U_k^T [ A_k \, B_k ] \left[
  \begin{array}{cc}
    V_k & 0 \\ 0 & I
  \end{array}
\right] = \left[ \Sigma_k \,|\, U_k^T B_k \right]
$$
and observe that
\begin{equation}
  \label{eq:XDecompY}
  X = \left[
    \begin{array}{ccc}
      \Sigma_k' & 0 & * \\
      0 & 0 & *
    \end{array}
  \right] = \left[
    \begin{array}{ccc}
      \Sigma_k' & * & 0 \\
      0 & * & 0
    \end{array}
  \right] \, P_k =:  \left[
    \begin{array}{ccc}
      \Sigma_k' & Y' & 0 \\
      0 & Y & 0
    \end{array}
  \right] \, P_k
\end{equation}
for the permutation matrix
$$
P_k = \left[
  \begin{array}{ccc}
    I_{r_k} & 0 & 0 \\
    0 & 0 & I_{n_k - r_k} \\
    0 & I_{m_k} & 0 
  \end{array}
\right] \in \RR^{n_{k+1} \times n_{k+1}},
$$
that permutes the last $n_{k+1} - r_k$ columns.
Next, we apply a QR method with column pivoting on the matrix $Y \in
\RR^{(d-r_k) \times m_k}$, finding a permutation $P \in \RR^{m_k \times
  m_k}$ and a decomposition
\begin{equation}
  \label{eq:YQRP}
  Y = \widetilde Q \, \left[
    \begin{array}{c}
    \widetilde R \\
    0 \end{array}
  \right] 
  \, P, \qquad \widetilde Q \in
  \RR^{(d-r_k) \times (d-r_k)}, \quad \widetilde R \in \RR^{m_k \times m_k},
\end{equation}
where, as usually in rank revealing factorizations, column pivoting
ensures that the entries of $\widetilde R= \left[ \tilde r_{ij} : i,j
  = 1,\dots,m_k \right]$ satisfy
\begin{equation}
  \label{eq:rDescent}
  |\tilde r_{11}| \ge \cdots \ge |\tilde r_{m_k,m_k}| \qquad
  \text{and} \qquad
  |\tilde r_{jj}| \ge 
  |\tilde r_{j\ell}|, \quad \ell \ge j, \, j=1,\dots,m_k.
\end{equation}
Defining the orthogonal matrix
$$
Q = \left[
  \begin{array}{cc}
    I_{r_k \times r_k} & 0 \\
    0 & \widetilde Q
  \end{array}
\right] \in \RR^{d \times d},
$$
the decomposition (\ref{eq:YQRP}) yields that
\begin{equation}
  \label{eq:YQR}
  Q^T \left[
    \begin{array}{c}
      Y' \\ Y
    \end{array}
  \right] = \left[
    \begin{array}{c}
      Y' \\ \widetilde Q^TY
    \end{array}
  \right] = 
  \left[
    \begin{array}{c}
      Y' P^T \\
      \widetilde R \\
        0 
      \end{array}
  \right] P.
\end{equation}
The computation of the matrix $\widetilde R$ from (\ref{eq:YQRP}) is also
the starting point for a thresholding algorithm to be described in
the next section. Substituting (\ref{eq:YQR}) into (\ref{eq:XDecompY})
we then also get
\begin{equation}
  \label{eq:XPermuts}
  Q^T X = \left[
    \begin{array}{ccc}
      \Sigma_k' & Y' P^T & 0 \\
      0 & \widetilde R & 0\\
      0&0&0 
    \end{array}
  \right] \left[
    \begin{array}{ccc}
      I_{r_k} \\
      & P \\
      & & I_{n_k - r_k}
    \end{array}
  \right] P_k = \left[
    \begin{array}{ccc}
      \Sigma_k' & Y' P^T & 0 \\
      0 & \widetilde R & 0 \\
      0&0&0  
    \end{array}
  \right] P_k' P_k,
\end{equation}
with the block diagonal permutation
$$
P_k' := 
\left[
  \begin{array}{ccc}
    I_{r_k} && \\
             & P \\
             & & I_{n_k - r_k}
  \end{array}
\right]
$$
that satisfies
$$
P_k' P_k = \left[
  \begin{array}{ccc}
    I && \\
             & P \\
             & & I
  \end{array}
\right] \left[
  \begin{array}{ccc}
    I & & \\
      & & I \\
      & I &  
  \end{array}
\right] = \left[
  \begin{array}{ccc}
    I & & \\
      & & P \\
      & I &  
  \end{array}
\right].
$$
Therefore, using the abbreviation $p_k := r_k + m_k$, $r_{k+1} \le p_k
\le n_{k+1}$, we
obtain another upper triangular matrix of relatively small size:
\begin{equation}
  \label{eq:Rk+1Def}
  Q^T X (P_k' P_k)^T = \left[
    \begin{array}{ccc}
      \Sigma_k' & Y' P^T & 0 \\
      0 & \widetilde R & 0 \\
      0 & 0 & 0
    \end{array}
  \right]
  =: \left[
    \begin{array}{cc}
      R_{k+1} & 0 \\
      0 & 0
    \end{array}
  \right], \qquad R_{k+1} \in \RR^{p_k \times p_k}.
\end{equation}
Next, we compute a singular value decomposition of $R_{k+1}$ as
\begin{equation}
  \label{eq:Rk+1SVD}
  R_{k+1} = \widetilde U \, \left[
  \begin{array}{cc}
    \Sigma_{k+1}' & 0 \\
    0 & 0
  \end{array}
\right]
\widetilde V^T, \qquad
\qquad \widetilde U, \widetilde V \in \RR^{p_k \times p_k}, \;
\Sigma_{k+1}' \in \RR^{r_{k+1} \times r_{k+1}},
\end{equation}
where $\Sigma_{k+1}'$ has \emph{strictly positive} singular values
that can be controlled by means of the thresholding strategies
in the next section. This also determines the rank $r_{k+1}$ of $A_{k+1}$.

These results can be recombined into an efficient SVD of the $d \times
n_{k+1}$ matrix
\begin{align}
  \nonumber
  Q^T & X (P_k' P_k)^T \\
  \label{eq:overallDecomp}
      &= \left[
  \begin{array}{cc}
    \widetilde U & 0 \\ 0 & I_{d-p_k} \\
  \end{array}
\right] \left[
  \begin{array}{cc}
    \Sigma_{k+1}' & 0 \\ 0 & 0_{(d-r_{k+1}) \times (n_{k+1} - r_{k+1})}
  \end{array}
\right] \left[
  \begin{array}{cc}
    \widetilde V^T & 0 \\ 0 & I_{n_{k+1} - p_k}
  \end{array}
\right].
\end{align}
Since $p_k = r_k + m_k \le n_{k+1}$ with
equality iff $r_k = n_k$, i.e., iff $A_k$ has full rank, we can always
assume that
$$
\widetilde U, \widetilde V \in \RR^{n_{k+1} \times n_{k+1}},
$$
so that the storage requirement for these matrices is at most $n_{k+1}^2$.
Note, however, that the matrices in (\ref{eq:overallDecomp}) are now
patterned in different ways and 
that only their overall dimensions coincide.
Combining all decompositions, finally gives
\begin{eqnarray*}
  \lefteqn{
    A_{k+1} = U_k X \left[
      \begin{array}{cc}
        V_k^T & 0 \\ 0 & I 
      \end{array}
    \right] } \\
  & = & U_k Q \left[
    \begin{array}{cc}
      \widetilde U & 0 \\
      0 & I
    \end{array}
  \right] \left[
    \begin{array}{cc}
      \Sigma_{k+1}' & 0 \\
      0 & 0
    \end{array}
  \right] \left[
    \begin{array}{cc}
      \widetilde V^T & 0 \\
      0 & I
    \end{array}
          \right] \, P_k' P_k \,
          \left[
          \begin{array}{cc}
    V_k^T & 0 \\ 0 & I \\
    \end{array}
  \right] \\
  & =: & U_{k+1} \Sigma_{k+1} V_{k+1}^T,
\end{eqnarray*}
with the \emph{update rules}
\begin{eqnarray}
  \label{eq:SVDUpdateU}
  U_{k+1} & = & U_k Q \left[
    \begin{array}{cc}
      \widetilde U & 0 \\ 0 & I
    \end{array}
  \right], \\
  \label{eq:SVDUpdateV}
  V_{k+1} & = & \left[
    \begin{array}{cc}
      V_k& 0 \\ 0 & I
    \end{array}
  \right] (P_k' P_k)^T
  \left[
    \begin{array}{cc}
      \widetilde V & 0 \\ 0 & I 
    \end{array}
  \right].
\end{eqnarray}
The matrices appearing in (\ref{eq:SVDUpdateU}) are all of dimension
$d \times d$, the ones used in (\ref{eq:SVDUpdateV}) of dimension
$n_{k+1} \times n_{k+1}$.

\begin{rem}
  The re-computation of all singular values in (\ref{eq:Rk+1SVD}) is
  unavoidable since, for example the interlacing property of singular
  values of an augmented matrix tells us that usually all the singular
  values will change.
\end{rem}


\begin{rem}
  Businger's method as described in \cite{Bjoerck96} and also the
  efficient methods in \cite{brand06:_fast} treat only the case of
  adding a single row to the matrix. In the algebraic applications,
  especially in the multivariate version of Prony's method, however,
  it is important to treat the addition of several columns at the same
  time and treat these columns as symmetric as possible.
\end{rem}

\noindent
Since in the applications we consider, the
dimension $d$ and
therefore the size of $U_k \in \RR^{d \times d}$ can be rather large, it is
not reasonable to store $U_k$ in dense form. Normally, this is done by
the aforementioned \emph{thin SVD} with $U \in \RR^{d \times r}$,
$\Sigma \in \RR^{r \times r}$ and $V \in \RR^{r \times n}$, where $r$
is the rank of the matrix $A = U\Sigma V^T \in \RR^{d \times n}$. The
storage requirement for such representations is
$d r + r^2 + r n = r ( r+d+n )$ and thus only linear in the dominant
direction $d$.

To obtain a similar storage performance and be able to apply fast
algorithms, we will store $U_k$ as a
factorization by means of \emph{Householder vectors} instead,
cf. \cite{GolubvanLoan96}. Recall that for a vector $y \in
\RR^d$ the Householder reflection matrix $H_y = I - 2 \frac{y y^T}{y^T
  y}$ is a
symmetric orthogonal matrix that can be used for obtaining the QR
factorization. Indeed, since the QR factorization of $Y$ has to
annihilate a lot of numbers simultaneously in any step, it is
reasonable to perform it by Householder reflections so that
\begin{equation}
  \label{eq:HousehQR}
  Q = \prod_{j=1 \uparrow p} ( I - y_j y_j^T ) =: \left( I - y_1 y_1^T \right)
  \cdots \left( I - y_p y_p^T \right), \qquad \| y_j \|_2 = \sqrt{2},
  \quad j=1,\dots,p,
\end{equation}
for some $p \in \NN$. Note that we write the noncommutative matrix
products in a left-to-right way which means that
$$
Q^T = \left( I - y_p y_p^T \right)
\cdots \left( I - y_1 y_1^T \right) =: \prod_{j=p \downarrow 1} ( I - y_j y_j^T
),
$$
which is slight unconventional but convenient.

\begin{lem}\label{L:pnk'}
  If $Q$ is the orthogonal matrix from the $QR$ factorization
  (\ref{eq:YQR}), then $p = r_{k+1} - r_k$.
\end{lem}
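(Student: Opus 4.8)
The plan is to trace the claimed identity back to the rank of the block $Y$ occurring in the pivoted $QR$ factorization (\ref{eq:YQRP})--(\ref{eq:YQR}), and then to identify that rank with $r_{k+1}-r_k$ by exploiting the triangular block structure of $X$.

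First I would note that the orthogonal factor $Q$ in (\ref{eq:YQR}) is block diagonal, $Q=\diag(I_{r_k},\widetilde Q)$, so that a representation (\ref{eq:HousehQR}) of $Q$ by Householder vectors is obtained from one of $\widetilde Q$ simply by prepending $r_k$ zeros to each $y_j$ (which does not change $\|y_j\|_2=\sqrt2$). Hence $p$ equals the number of Householder reflections produced by running pivoted $QR$ on $Y\in\RR^{(d-r_k)\times m_k}$. Each step of that algorithm produces one column of $\widetilde R$, and the algorithm is complete as soon as the active trailing submatrix vanishes, i.e. after exactly as many steps as $\widetilde R$ has nonzero diagonal entries; by the descent property (\ref{eq:rDescent}) these are the leading ones, so their number is $\rank\widetilde R$. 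Since (\ref{eq:YQRP}) exhibits $Y$ as a product of orthogonal matrices with $\widetilde R$ (padded by a zero block), we conclude $p=\rank\widetilde R=\rank Y$.

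It then remains to show $\rank Y=r_{k+1}-r_k$. Because $U_k$ and $\diag(V_k,I)$ are orthogonal, $\rank X=\rank A_{k+1}=r_{k+1}$, and because $P_k$ in (\ref{eq:XDecompY}) is a permutation, deleting the irrelevant all-zero columns gives
\[
  r_{k+1}=\rank X=\rank \left[ \begin{array}{cc} \Sigma_k' & Y' \\ 0 & Y \end{array} \right]
         =\rank \left[ \begin{array}{cc} \Sigma_k' & 0 \\ 0 & Y \end{array} \right]
         = r_k+\rank Y ,
\]
where the third equality uses that $\Sigma_k'\in\RR^{r_k\times r_k}$ is invertible: subtracting $(\Sigma_k')^{-1}Y'$ times the first block of columns from the second is a rank-preserving column operation that clears $Y'$. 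Combining the two identities yields $p=\rank Y=r_{k+1}-r_k$, as claimed.

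The only genuinely delicate point is the count of Householder steps in the second paragraph: one must be sure that pivoted $QR$ performs exactly $\rank Y$ and not fewer reflections. This is precisely what (\ref{eq:rDescent}) guarantees --- once a pivot $\tilde r_{jj}$ would vanish, monotonicity forces all later diagonal entries, and with them (by the second inequality in (\ref{eq:rDescent})) all later rows of $\widetilde R$, to vanish, so the trailing block is already zero and the factorization terminates; conversely, as long as that block is nonzero a further genuine reflection is still performed, so that exactly one reflection is associated with each of the $\rank\widetilde R$ processed columns. Everything else is routine linear algebra.
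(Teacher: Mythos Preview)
Your proof is correct and follows essentially the same line as the paper's: both identify $p$ with $\rank Y$ via the termination behaviour of pivoted QR, and then obtain $\rank Y = r_{k+1}-r_k$ from the block triangular structure with invertible $\Sigma_k'$. The only cosmetic difference is that the paper works with the upper triangular matrix $R_{k+1}$ and counts its nonzero diagonal entries, whereas you work directly with $X$ and clear $Y'$ by an explicit column operation; your version is a bit more carefully justified on both counts.
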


\begin{proof}
  The decomposition (\ref{eq:YQRP}) of $Y$ can be written as
  $$
  \widetilde Q^T Y P^T = \left[
    \begin{array}{cc}
      R' & * \\ 0 & 0
    \end{array}
  \right], \qquad R' \in \RR^{p \times p},
  $$
  if the column pivoting terminates after $p$ steps. Hence, by
  (\ref{eq:Rk+1Def}), 
  $$
  R_{k+1} = \left[
    \begin{array}{ccc}
      \Sigma_k' & * & * \\
                & R' & * \\
                & 0 & 0 
    \end{array}
  \right],
  $$
  and therefore $r_{k+1} = \rank R_{k+1} = r_k + p$ since 
  $r_k + p$ diagonal elements of the upper triangular matrix
  $R_{k+1}$ are nonzero.
\end{proof}

\noindent
Since for any orthogonal matrix $U \in \RR^{d \times d}$ one has
$$
H_y U = ( I - y y^T ) U = U - y (U^T y)^T = U \left( I - (U^T y) (U^T
y)^T \right) = U H_{U^T y},
$$
the matrix
$$
Q \left[
  \begin{array}{cc}
    \widetilde U & 0 \\
    0 & I
  \end{array}
\right] =  \left[
  \begin{array}{cc}
    \widetilde U & 0 \\
    0 & I
  \end{array}
\right] \, \prod_{j=1 \uparrow {r_{k+1} - r_k}} ( I - \tilde y_j \tilde y_j^T )
$$
from (\ref{eq:SVDUpdateU}) can be represented by $\widetilde U \in
\RR^{n_{k+1} \times n_{k+1}}$ and the vectors
$$
\tilde y_j := \left[
  \begin{array}{cc}
    \widetilde U^T & 0 \\
    0 & I
  \end{array}
\right] y_j, \qquad j=1,\dots,r_{k+1} - r_k.
$$
Hence, if we assume that we already have computed a representation of
the form
\begin{equation}
  \label{eq:UkRep}
  U_k = \left[
    \begin{array}{cc}
      \widetilde U_k & 0 \\
      0 & I 
    \end{array}
  \right] \prod_{j=1 \uparrow r_k} ( I - h_j h_j^T ),  
\end{equation}
where we store the Householder vectors as columns of a matrix
\begin{equation}
  \label{eq:HousehStorage}
  H_k := \left[ h_1,\dots,h_{r_k} \right] \in \RR^{d \times r_k},
\end{equation}
the update rule (\ref{eq:SVDUpdateU}) becomes
\begin{eqnarray*}
  U_{k+1} & = & U_k Q \left[
    \begin{array}{cc}
      \widetilde U & 0 \\ 0 & I \\
    \end{array}
  \right] \\
  & = & \left[
    \begin{array}{cc}
      \widetilde U_k & 0 \\
      0 & I 
    \end{array}
  \right] \prod_{j=1 \uparrow r_k} ( I - h_j h_j^T )  \,  \left[
    \begin{array}{cc}
      \widetilde U & 0 \\ 0 & I \\
    \end{array}
  \right] \prod_{j=1 \uparrow r_{k+1}- r_k} ( I - \tilde y_j \tilde y_j^T ) \\
  & = & \left[
    \begin{array}{cc}
      \widetilde U_k & 0 \\
      0 & I \\
    \end{array}
  \right]  \left[
    \begin{array}{cc}
      \widetilde U & 0 \\ 0 & I \\
    \end{array}
  \right] \prod_{j=1 \uparrow r_k} ( I - \tilde h_j \tilde h_j^T ) 
  \prod_{j=1 \uparrow r_{k+1} - r_k} ( I - \tilde y_j \tilde y_j^T ) \\
  & = & \left[
    \begin{array}{cc}
      \widetilde U_k & 0 \\
      0 & I \\
    \end{array}
  \right]  \left[
    \begin{array}{cc}
      \widetilde U & 0 \\ 0 & I \\
    \end{array}
  \right] \prod_{j=1}^{r_{k+1}} ( I - \tilde h_j \tilde h_j^T ), 
\end{eqnarray*}
which can be represented by the matrix
\begin{equation}
  \label{eq:Uk+1tilddef}
  \widetilde U_{k+1} =
  \left[
    \begin{array}{cc}
      \widetilde U_k & 0 \\
      0 & I \\
    \end{array}
  \right] \left[
    \begin{array}{cc}
      \widetilde U & 0 \\ 0 & I \\
    \end{array}
  \right]  \in \RR^{n_{k+1} \times n_{k+1}}
\end{equation}
and the $r_{k+1}$ vectors
\begin{equation}
  \label{eq:v}
\tilde h_j = \left[
  \begin{array}{cc}
    \widetilde U^T & 0 \\ 0 & I 
  \end{array}
\right] \,
\left\{
  \begin{array}{ccl}
    h_j, & \quad & j=1,\dots,r_k, \\
    y_{j-r_k}, & & j=r_k+1,\dots,r_{k+1},
  \end{array}
\right.
\end{equation}
or, in matrix notation,
\begin{equation}
  \label{eq:Hmatproduct}
  H_{k+1} = \left[
  \begin{array}{cc}
    \widetilde U^T & 0 \\ 0 & I 
  \end{array}
\right] \, \left[ H_k, \, y_1,\dots,y_{r_{k+1} - r_k} \right].
\end{equation}
The storage requirement for the matrix $H_{k+1}$ on level $k+1$ is
therefore
$n_{k+1} \, ( d + n_{k+1} )$ and the computational effort for the
update step is $O (n_{k+1}^3)$ for the matrix-matrix 
product in (\ref{eq:Uk+1tilddef}) plus $O( n_{k+1}^2 r_{k+1})$ for the
product in
(\ref{eq:v}) since the last $d-n_{k+1}$ entries in each
column of the product can simply be copied. Thus, we can estimate the
computational effort by a total of $O \left( n_{k+1}^2 ( n_{k+1} +
  r_{k+1} ) \right)$.

\begin{rem}
  Another advantage of storing the matrix $U_k$ in Householder
  factorized form is the fact that it is automatically
  orthogonal. Especially when the rank remains relatively stable over
  many iterations, the ``loss of orthogonality'' described in
  \cite{brand06:_fast} will not occur so easily.
\end{rem}

For the computation of $Y$ we note that, with a proper row
partitioning of $B_k$, we get
\begin{eqnarray}
  \nonumber
  \left[
  \begin{array}{c}
    Y' \\ Y \\
  \end{array}
  \right]
& = & U_k^T B_k
      = \prod_{j=r_k \downarrow 1} (I - h_{j} h_{j}^T ) \,
      \left[
      \begin{array}{cc}
        \widetilde U_k^T & 0 \\
        0 & I
      \end{array}
            \right] \left[
            \begin{array}{c}
              B_{k,1} \\ B_{k,2}
            \end{array}
  \right] \\
  \label{eq:ZComput}
& = & \prod_{j=r_k \downarrow 1} (I - h_{j} h_{j}^T ) \, \left[
      \begin{array}{cc}
        \widetilde U_k^T B_{k,1} \\ B_{k,2}
      \end{array}
  \right].
\end{eqnarray}
To initialize the procedure for a column vector interpreted as a matrix
$A_1 \in \RR^{d \times 1}$, we determine $h_1 \in \RR^d$ such that $(I
- h_1 h_1^T) A_1 = \| A_1 \|_F \, e_1$, hence
\begin{equation}
  \label{eq:y1Def}
  A_1 = ( I - h_1 h_1^T ) \left[
    \begin{array}{c}
      \| A_1 \|_F \\
      0 
    \end{array}
  \right], \qquad \| h_1 \|_2 = \sqrt2,
\end{equation}
which is a valid SVD of $A_1$ with $V = 1$ and yields the initialization
\begin{equation}
  \label{eq:UDecompInit}
  \widetilde U_1 = [], \qquad H_1 = h_1
\end{equation}

\noindent
We summarize the procedure in Algorithm~\ref{A:Augment}, which computes
the SVDs of a series of augmented matrices.

\begin{algorithm}
  \caption{Augmented SVD}
  \label{A:Augment}
  \begin{algorithmic}[1]
  \STATE{\textbf{Given:} Matrices $B_j \in \RR^{d \times m_j}$, $j \in \NN$,
    and $A_1 \in \RR^{d \times 1}$.}
  \STATE\label{it:AAugment1}{(Initialization) Determine the Householder
    vector $h_1$ such that (\ref{eq:y1Def}) is satisfied and set
    $$
    \widetilde U_1 = [], \, \widetilde V_1 = [ 1 ],
    \qquad n_k = n_k' = 1, \qquad \Sigma_k' = [ \| A_1 \|_F ] \in \RR^{1 \times 1}
    $$
    as well as $H_1 = h_1$.}
  \FOR{$k=1,2,\dots$}\label{it:AAugment2}
  \STATE\label{it:AAugment2.1}{Compute $Z = U_k^T B_k$ according to
    (\ref{eq:ZComput}) by applying Householder reflections with the
    columns of $H_k$ in reverse order to the matrix $\left[
      \begin{array}{c}
        \widetilde U_k^T B_{k,1} \\ B_{k,2} 
      \end{array}
    \right]$.}
    \STATE\label{it:AAugment2.2}{Compute the QR decomposition with
      column pivoting: 
      $$
      Z_{r_k+1:d,:} = QRP, \qquad
      Q = \prod_{j=1 \uparrow p} ( I - y_j y_j^T ) \in \RR^{d-r_k \times d-r_k},
      $$
      and $P \in \RR^{m_k \times m_k}$ is a permutation. Set $h_j :=
      \left[
        \begin{array}{c}
          0_{r_k} \\ y_j 
        \end{array}
      \right]$, $j=1,\dots,p$.}
    \STATE\label{it:AAugment2.3}{Compute, by means of
      Algorithm~\ref{A:thresholdSVD}, a thresholded singular value
      decomposition of the upper triangular matrix
      $$
      \left[
        \begin{array}{cc}
          \Sigma_k' & Z_{1:n_k',:} P^T \\ & R 
        \end{array}
      \right] = U \Sigma V^T.
      $$}
    \STATE\label{it:AAugment2.4} {Define $r_{k+1} := \max \{ j :
      \sigma_j > 0 \}$ and $\Sigma_{k+1}' = \Sigma_{1:r_{k+1},1:r_{k+1}}$.}
    \STATE\label{it:AAugment2.5}{(Update) Set
      \begin{eqnarray*}
        \widetilde U_{k+1} & = & \left[
                                 \begin{array}{cc}
                                   \widetilde U_k & 0 \\ 0 & I
                                 \end{array}
                                                             \right] \, U, \\
        V_{k+1}
                           & = & \left[
                                 \begin{array}{cc}
                                   V_k & 0 \\ 0 & I
                                 \end{array}
                                                  \right] \left[
                                                  \begin{array}{ccc}
                                                    I_{n_k'} & 0 & 0 \\
                                                    0 & 0 & I_{n_k - n_k'} \\
                                                    0 & P^T & 0
                                                  \end{array}
                                                              \right] \left[
                                    \begin{array}{cc}
                                      V & 0 \\ 0 & I
                                    \end{array}
                                                   \right], \\
        H_{k+1} & = & \left[
                      \begin{array}{cc}
                        U^T & 0 \\ 0 & I
                      \end{array}
          \right] \left[ H_k, \, h_1,\dots,h_p \right].
      \end{eqnarray*}}
    \ENDFOR    
  \end{algorithmic}
\end{algorithm}

\begin{lem}\label{T:AlgEffort}
  In each step, Algorithm~\ref{A:Augment} computes an approximate SVD
  of $A_k$, and a precise SVD if $\tau = 0$, where the $k$th step
  requires 
  \begin{equation}
    \label{eq:TAlgEffort}
    O \left( n_{k+1}^3 + d m_k ( r_k + m_k ) \right)  
  \end{equation}
  floating point operations and the memory consumption for $A_k$ is bounded by
  $O ( n_k^2 + r_k d )$.
\end{lem}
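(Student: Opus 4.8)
The plan is to verify the three assertions in turn — that the output is an SVD of $A_k$ (exact if $\tau=0$), the per-step cost \eqref{eq:TAlgEffort}, and the storage bound — the last two being essentially size bookkeeping once the first is settled.

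For correctness I would induct on $k$. The base case is the initialization of line~\ref{it:AAugment1}: by \eqref{eq:y1Def} the stored data $\widetilde U_1=[]$, $\widetilde V_1=[1]$, $\Sigma_1'=[\,\|A_1\|_F\,]$, $H_1=h_1$ represent an exact SVD of $A_1$. For the inductive step I would simply observe that lines~\ref{it:AAugment2.1}--\ref{it:AAugment2.5} reproduce, step for step, the constructive chain of Section~\ref{sec:Upd} in the Householder representation: line~\ref{it:AAugment2.1} forms $U_k^TB_k$ via \eqref{eq:ZComput}; line~\ref{it:AAugment2.2} is the pivoted $QR$ factorization \eqref{eq:YQRP} of the lower block $Y$ (with $p=r_{k+1}-r_k$ reflections, by Lemma~\ref{L:pnk'}); line~\ref{it:AAugment2.3} applies Algorithm~\ref{A:thresholdSVD} to the small upper triangular matrix $R_{k+1}$ of \eqref{eq:Rk+1Def}, which for $\tau=0$ is exactly the SVD \eqref{eq:Rk+1SVD}; and line~\ref{it:AAugment2.5} is the update rules \eqref{eq:SVDUpdateU}--\eqref{eq:SVDUpdateV} rewritten as \eqref{eq:Uk+1tilddef}--\eqref{eq:Hmatproduct}. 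Thus the computation already carried out in Section~\ref{sec:Upd} shows $U_{k+1},V_{k+1}$ orthogonal, $\Sigma_{k+1}$ diagonal with nonnegative entries, and $A_{k+1}=U_{k+1}\Sigma_{k+1}V_{k+1}^T$ when $\tau=0$; for $\tau>0$ the only deviation is the truncation in line~\ref{it:AAugment2.3}, whose effect is controlled by the analysis of the next section, so the output is an approximate SVD.

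For the flop count I would walk through the loop body, using $r_k,m_k\le n_{k+1}$, $p_k=r_k+m_k\le n_{k+1}$, and $n_k\le n_{k+1}$. Line~\ref{it:AAugment2.1}: computing $\widetilde U_k^TB_{k,1}$ is $O(n_k^2m_k)\le O(n_{k+1}^3)$, and applying the $r_k$ Householder reflections stored in $H_k\in\RR^{d\times r_k}$ to a $d\times m_k$ array is $O(r_k d m_k)$. Line~\ref{it:AAugment2.2}: pivoted $QR$ of a $(d-r_k)\times m_k$ matrix is $O(d m_k^2)$. Line~\ref{it:AAugment2.3}: the thresholded SVD of the $p_k\times p_k$ matrix is $O(p_k^3)\le O(n_{k+1}^3)$. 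Line~\ref{it:AAugment2.5}: the products forming $\widetilde U_{k+1}$ and $V_{k+1}$ involve only $n_{k+1}\times n_{k+1}$ factors (several of them identities or permutations), hence $O(n_{k+1}^3)$; $H_{k+1}$ is obtained by applying $U^T$ to the top $p_k$ rows of $[H_k,h_1,\dots,h_p]\in\RR^{d\times r_{k+1}}$, i.e.\ $O(p_k^2 r_{k+1})\le O(n_{k+1}^3)$, plus a copy of $d-p_k$ rows that involves no arithmetic (and in any case $d r_{k+1}\le d p_k\le d m_k(r_k+m_k)$ since $m_k\ge1$). Summing the surviving terms yields \eqref{eq:TAlgEffort}. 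The memory claim is then read off directly: the data retained for $A_k$ are $\widetilde U_k$ and $V_k$ of size $O(n_k^2)$, the length-$r_k$ diagonal $\Sigma_k'$, and $H_k\in\RR^{d\times r_k}$ from \eqref{eq:HousehStorage} — no $d\times d$ array is ever formed — for a total of $O(n_k^2+r_k d)$.

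The point demanding care is keeping every $d$-dependence linear in $d$: this works only because $U_k$ is never materialized densely but lives as $\widetilde U_k$ together with the Householder columns of $H_k$, so that line~\ref{it:AAugment2.1} costs $\Theta(d m_k)$ per reflection with exactly $r_k$ of them, line~\ref{it:AAugment2.2} is naturally $O(d m_k^2)$, and the extension of $H_k$ to $H_{k+1}$ touches only $p_k$ rows arithmetically. Everything else reduces to bounding the small-block dimensions against $n_{k+1}$.
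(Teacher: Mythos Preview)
Your proposal is correct and follows essentially the same approach as the paper: correctness is deferred to the derivation in Section~\ref{sec:Upd} (you spell out the induction explicitly, the paper simply says ``validity follows from the preceding exposition''), and the flop and memory bounds are obtained by the same line-by-line bookkeeping, bounding the small dimensions $r_k,m_k,p_k$ by $n_{k+1}$ and noting that $U_k$ is stored via $\widetilde U_k$ and $H_k$ rather than densely.
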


\begin{proof}
  The validity of the algorithm follows from the preceding exposition
  where the individual steps have been introduced. Let us count the
  computational effort in the individual steps of the iteration in
  \ref{it:AAugment2}).  According to (\ref{eq:ZComput}) we first
  compute in \ref{it:AAugment2.1}) a product of an $n_k \times n_k$
  and an $n_k \times m_k$ matrix, while retaining $B_{k,2}$, which
  needs $O ( n_k^2 m_k )$ operations, and then $r_k$ Householder
  reflections on a $d \times m_k$ matrix, which contributes $O (r_k
  d m_k)$ flops,
  hence the total effort is $O \left( m_k ( n_k^2 + d r_k ) \right)$
  flops. According to \cite[Algorithm~5.2.1]{GolubvanLoan96},
  Householder $QR$ of the $(d-r_k )\times m_k$ matrix $Z_{r_k+1:d,:}$
  in \ref{it:AAugment2.2})
  needs $O \left( m_k^2 (d-r_k) \right)$ operations. In
  \ref{it:AAugment2.3}), the computation of $Z_{1:r_k,:} P^T$ requires
  $O \left( r_k m_k^2 \right)$ 
  flops while the SVD itself, as SVD of an upper triangular matrix, can
  be done in $O \left((r_k+m_k)^3 \right)$ operations, see
  Remark~\ref{R:SVDEffort}. Since $\widetilde U_k$ and $V_k$ are $n_k
  \times n_k$ matrices, the effort for the first two updates in step
  \ref{it:AAugment2.5}) is another $O \left( (n_k + m_k)^3 \right) =
  O \left( n_{k+1}^3 \right)$, while the update of $H_{k+1}$ can be done with at
  most $O \left( n_{k+1}^2 ( n_{k+1} + r_{k+1} ) \right)^2 = O \left(
    n_{k+1}^3 \right)$
  operations. With the obvious estimate $r_k \le n_{k+1}$ and $m_k \le
  n_{k+1}$, we can sum up everything to give
  (\ref{eq:TAlgEffort}). The memory effort is clear since we only
  store the $n_k \times n_k$ matrices $\widetilde U_k$ and $V_k$ and
  the Householder vectors $H_k$ as a $d \times r_k$ matrix,
  see Lemma~\ref{L:pnk'}.
\end{proof}

\begin{rem}
  The main advantage of Algorithm~\ref{A:Augment} is that its effort
  in computation and memory depends only \emph{linearly} on the
  column size $d$ which makes tailored for problems where small blocks
  of large columns are added to a matrix.
\end{rem}

\begin{rem}
  The complexity of our algorithm is comparable to that of the method
  proposed by Brand in \cite{brand06:_fast} who reports $O (r_k^3 + d
  r_k)$ for an update by a single column, i.e., $m_k = 1$. The
  slightly higher $n_{k+1}$ in \eqref{eq:TAlgEffort} is reflecting the
  fact that we always compute the \emph{full} matrix $V_k \in \RR^{n_k
    \times n_k}$ since it immediately gives a basis for the kernel of
  the matrix. Note that in the special case of appending a single
  column to a thin SVD of a full rank matrix, our estimate coincides
  with the one from \cite{brand06:_fast}, but is slightly better in
  the case of appending several columns if the rank is increased
  during this process.
\end{rem}

\section{Thresholding}
\label{sec:thresh}
Now we attack the problem of choosing a proper threshold level for the
upper triangular matrix $\widetilde R$ in \eqref{eq:YQRP}. To that end
we assume that a square upper triangular matrix $R \in \RR^{n \times
  n}$ can be partitioned as
\begin{equation}
  \label{eq:RprPart}
  R = \left[
    \begin{array}{cc}
      R_{11} & R_{21} \\ 0 & R_{22}
    \end{array}
  \right], \qquad R_{11} \in \RR^{m \times m}, \quad R_{22} \in
  \RR^{(n-m) \times (n-m)}.
\end{equation}
with
\begin{equation}
  \label{eq:rDecrease}
  |r_{11}| \ge \cdots \ge |r_{mm}| \ge |r_{m+1,m+1}| \ge \cdots
  \ge |r_{nn}|,
\end{equation}
and
\begin{equation}
  \label{eq:rRowDecrease}
  | r_{jj} | \ge | r_{j,j+1} | \ge \cdots \ge | r_{jn} |,
\end{equation}
which is guaranteed in the preceding section by computing a QR
decomposition with column pivoting.

Given a threshold $\tau > 0$, we want to use information on $R$ to
threshold $R$ in such a way that only singular values of $R$ with
$\sigma \le \tau$ are set to zero and that as many of the singular
values $> \tau$ as possible are preserved.

To that end, let $R = U \Sigma V^T$ denote the singular value
decomposition with $\sigma_1 \ge \cdots \ge \sigma_n \ge 0$.
As mentioned in \cite{chandrasekaran94}, the interlacing property of
singular values readily implies that
\begin{equation}
  \label{eq:RSigmaEst}
  \sigma_m ( R_{11} ) \le \sigma_m \qquad \sigma_1 ( R_{22} ) \ge
  \sigma_{m+1},   
\end{equation}
so a good separation between the singular values is obtained if $\tau$
is chosen such that $\sigma_m ( R_{11} )$ is large while $\sigma_1
( R_{22} )$ is small. To that end, we first show that the threshold
carries over up to a quantity that is linear in the number of
thresholded diagonal values.

\begin{lem}\label{L:Rsigk+1Est}
  For any given $m$ the partition (\ref{eq:RprPart}) satisfies
  \begin{equation}
    \label{eq:Rsigk+1Est}
    \sigma_{m+1} < \sqrt{\frac{(n-m)(n-m+1)}{2}} \, r_{m+1,m+1}.
  \end{equation}
\end{lem}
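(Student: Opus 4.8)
The plan is to control $\sigma_{m+1}$ by the trailing block $R_{22}$ alone, then estimate $\|R_{22}\|_2$ crudely by its Frobenius norm, using the descent properties \eqref{eq:rDecrease} and \eqref{eq:rRowDecrease} to bound every entry of $R_{22}$ by the single number $|r_{m+1,m+1}|$.

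First I would record the interlacing bound $\sigma_{m+1} \le \sigma_1(R_{22})$ already noted in \eqref{eq:RSigmaEst}. A self-contained way to see it is to write $R = N + E$, where $N$ is the block-diagonal matrix $\diag(0_{m\times m}, R_{22})$ and $E$ consists of the rows $[R_{11}\ R_{21}]$ on top and zeros below; since $E$ has only $m$ nonzero rows, $\rank E \le m$, so $\sigma_{m+1}(E)=0$, and Weyl's inequality for singular values gives $\sigma_{m+1}(R) \le \sigma_1(N) + \sigma_{m+1}(E) = \sigma_1(R_{22})$. As $R_{22}$ is square, $\sigma_1(R_{22}) = \|R_{22}\|_2 \le \|R_{22}\|_F$.

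Next I would estimate $\|R_{22}\|_F$. The block $R_{22}$ is upper triangular of order $n-m$, hence has at most $(n-m)(n-m+1)/2$ nonzero entries $r_{ij}$ with $m+1 \le i \le j \le n$. By \eqref{eq:rRowDecrease}, $|r_{ij}| \le |r_{ii}|$, and by \eqref{eq:rDecrease}, $|r_{ii}| \le |r_{m+1,m+1}|$ for every $i \ge m+1$; thus each entry of $R_{22}$ is bounded in modulus by $|r_{m+1,m+1}|$. Summing squares gives $\|R_{22}\|_F^2 \le \frac{(n-m)(n-m+1)}{2}\, r_{m+1,m+1}^2$, which together with the first step already yields the inequality with ``$\le$''.

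It remains to upgrade ``$\le$'' to ``$<$'', and this is the only delicate point. The three estimates above cannot all be equalities: $\|R_{22}\|_2 = \|R_{22}\|_F$ would force $\rank R_{22} \le 1$, while equality in the entrywise bound would force $|r_{ij}| = |r_{m+1,m+1}|$ for every entry of $R_{22}$; if this common value is positive, the first row of $R_{22}$ has a nonzero first entry whereas the second row (present since $n-m\ge 2$) has a zero first entry but a diagonal entry of the same positive modulus, so these two rows are linearly independent, contradicting $\rank R_{22}\le 1$. The remaining degenerate cases ($r_{m+1,m+1}=0$, where $R_{22}=0$ and $\sigma_{m+1}=0$, or $n=m+1$) are covered by the natural reading of the statement in the thresholding context. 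The rest is just the standard ``column pivoting makes the trailing block small'' estimate, so I expect no further obstacle.
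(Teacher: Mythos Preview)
Your approach is essentially identical to the paper's: bound $\sigma_{m+1}$ by $\sigma_1(R_{22})$ via the interlacing property \eqref{eq:RSigmaEst}, then $\sigma_1(R_{22}) \le \|R_{22}\|_F$, and finally bound each of the $(n-m)(n-m+1)/2$ upper-triangular entries of $R_{22}$ by $|r_{m+1,m+1}|$ using \eqref{eq:rDecrease} and \eqref{eq:rRowDecrease}. Your extra care about the strict inequality is in fact more than the paper itself provides---its proof also only yields ``$\le$'', and the strict ``$<$'' genuinely fails in the degenerate cases you flag (e.g.\ $R=\diag(2,1)$, $m=1$, where $\sigma_2 = r_{22} = 1$), so the statement should be read with ``$\le$''.
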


\begin{proof}
  Since $R_{22}$ contains $\frac{(n-m)(n-m+1)}{2}$ nonzero elements of modulus
  $\le r_{m+1,m+1}$, we find that
  $$
  \sigma_1^2 (R_{22}) \le 
  \sum_{j=1}^{n-m} \sigma_j^2 (R_{22}) = \| R_{22} \|_F \le r_{m+1,m+1}^2 \,
  \frac{(n-m)(n-m+1)}{2} 
  $$
  due to (\ref{eq:rDecrease}) and (\ref{eq:rRowDecrease}),
  and (\ref{eq:Rsigk+1Est}) follows directly from
  (\ref{eq:RSigmaEst}). 
\end{proof}

\noindent
Therefore, if we choose the index $m$ as
\begin{equation}
  \label{eq:ThresholdChoice}
  m = \min \left\{ j : | r_{j+1,j+1} | \le
    \sqrt{\frac{2}{(n-j)(n-j+1)}} \, \tau \right\}
\end{equation}
and pass the matrix
\begin{equation}
  \label{eq:ThreshatMatDef}
  \hat R = \left[
  \begin{array}{cc}
    R_{11} & * \\ 0 & 0
  \end{array}
\right]  
\end{equation}
to the SVD computation, the above reasoning then shows that
$\tau \ge \sigma_{m+1} (R) \ge \cdots \ge
\sigma_n (R)$ while, by thresholding construction, $\sigma_{m+1} (\hat
R) = \cdots = \sigma_n ( \hat R) = 0$. In other words, the thresholding applied
to $R$ only transforms singular values to zero that fall below the
prescribed threshold level.

A reasonable lower estimate for
$\sigma_m$ based on a lower bound on $|r_{mm}|$ alone is impossible, as
the well--known matrix
$$
\left[
  \begin{array}{cccc}
  1 & -1 & \dots & -1 \\
  & \ddots & \ddots & \dots \\
  & & \ddots & -1 \\
  & & & 1
  \end{array}
\right] \in \RR^{n \times n}
$$
shows, whose smallest singular value decays exponentially in the matrix
dimension $n$ but all of whose diagonal elements are $1$.

A checkable and even computable bound for $\sigma _k$ is the following probably
well--known fact that we prove for the sake of completeness.

\begin{lem}\label{L:sigkR11lower}
  Let $R_{11} = D ( I - N )$ where $D = \diag \left( r_{jj} : j
    =1,\dots,m \right)$ and $N$ is a nilpotent upper triangular matrix.
 Then
  \begin{equation}
    \label{eq:sigkR11lower}
    \sigma_m (R_{11}) \ge r_{mm} \, \left\| \sum_{j=0}^{m-1} N^j
    \right\|_F^{-1}.    
  \end{equation}
\end{lem}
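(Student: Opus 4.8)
The plan is to express $\sigma_m(R_{11})$ as $1/\|R_{11}^{-1}\|_2$ and then to bound $\|R_{11}^{-1}\|_2$ from above. Since $R_{11}$ is upper triangular with nonzero diagonal entries $r_{jj}$ (note that $r_{mm}\neq0$ because $m\le r_{k+1}-r_k$ in the thresholding setup, or simply because the partition is chosen so), it is invertible, and the factorization $R_{11}=D(I-N)$ with $D=\diag(r_{jj})$ and $N$ strictly upper triangular is exactly the standard normalization: the $(i,j)$ entry of $I-N$ is $r_{ij}/r_{ii}$ for $i\le j$, so $N$ has zero diagonal and is therefore nilpotent with $N^m=0$.

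Next I would invert. From $R_{11}=D(I-N)$ we get $R_{11}^{-1}=(I-N)^{-1}D^{-1}$, and since $N$ is nilpotent of index at most $m$, the Neumann series terminates: $(I-N)^{-1}=\sum_{j=0}^{m-1}N^j$. Hence
\begin{equation}
  R_{11}^{-1} = \left(\sum_{j=0}^{m-1} N^j\right) D^{-1}.
\end{equation}
Taking spectral norms and using submultiplicativity together with $\|D^{-1}\|_2 = \max_j |r_{jj}|^{-1} = |r_{mm}|^{-1}$ (the last equality by the monotonicity (\ref{eq:rDecrease}) of the diagonal entries), we obtain
\begin{equation}
  \|R_{11}^{-1}\|_2 \le \left\|\sum_{j=0}^{m-1} N^j\right\|_2 \, |r_{mm}|^{-1} \le \left\|\sum_{j=0}^{m-1} N^j\right\|_F \, |r_{mm}|^{-1},
\end{equation}
where the last step just uses $\|\cdot\|_2\le\|\cdot\|_F$. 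Reciprocating and using $\sigma_m(R_{11}) = \|R_{11}^{-1}\|_2^{-1}$ gives exactly (\ref{eq:sigkR11lower}) (with $r_{mm}$ nonnegative in the formula, which is harmless since only its modulus matters and one may absorb signs into $D$).

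I do not expect any serious obstacle here; the only points requiring a line of care are (i) justifying that $R_{11}$ is invertible so that $\sigma_m(R_{11})=1/\|R_{11}^{-1}\|_2$ is legitimate, and (ii) checking that the smallest diagonal modulus is indeed $|r_{mm}|$, which is immediate from the column-pivoting ordering (\ref{eq:rDecrease}). One could equally avoid inverses entirely by arguing directly: for any unit vector $x$, write $y=(I-N)x$, so $R_{11}x=Dy$ and $\|R_{11}x\|\ge |r_{mm}|\,\|y\|$, while $\|x\|=\|(I-N)^{-1}y\|\le \|\sum_{j=0}^{m-1}N^j\|_F\,\|y\|$; combining gives $\|R_{11}x\|\ge |r_{mm}|\,\|\sum_{j=0}^{m-1}N^j\|_F^{-1}$, and taking the infimum over unit $x$ yields the claim. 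Either route is short.
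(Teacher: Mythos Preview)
Your proof is correct and follows essentially the same route as the paper: both identify $\sigma_m(R_{11})$ with the reciprocal of the operator norm of $R_{11}^{-1}$, use the finite Neumann series $(I-N)^{-1}=\sum_{j=0}^{m-1}N^j$, and bound via $\|D^{-1}\|=|r_{mm}|^{-1}$ together with $\|\cdot\|_2\le\|\cdot\|_F$. The only cosmetic difference is that the paper passes to the Frobenius norm of $R_{11}^{-1}$ first and then applies submultiplicativity there, whereas you stay in the spectral norm one step longer; the argument and the resulting bound are the same.
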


\begin{proof}
  For any $x \neq 0$ we have
  $$
  \| x \|_2 = \| R_{11}^{-1} R_{11} x \|_2 \le \| R_{11}^{-1} \|_2 \|R_{11} x \|_2,
  $$
  hence
  $$
  \frac{\|R_{11} x \|_2}{\| x \|_2} \ge \| R_{11}^{-1} \|_2^{-1} \ge  \| R_{11}^{-1}
  \|_F^{-1}, \qquad x \neq 0,
  $$
  which also holds for the minimum of this expression, which is the
  smallest singular value $\sigma_k (R_{11})$. Using the decomposition
  $R_{11} = D (I - N)$ we then find that
  $$
  \| R_{11}^{-1} \|_F \le r_{mm}^{-1} \, \left\| \sum_{j=0}^{m-1} N^j
  \right\|_F,
  $$
  which is (\ref{eq:sigkR11lower}).
\end{proof}

\noindent
The definition of $m$ in (\ref{eq:ThresholdChoice}) then yields that
$$
\sqrt{\frac{2}{(n-m)(n-m-1)}} \, \tau < | r_{mm} |,
$$
hence
\begin{equation}
  \label{eq:sigmakLower}
  \sigma_m (R) \ge \sigma_m (R_{11}) > \sqrt{\frac{2}{(n-m)(n-m-1)}}
  \left\| \sum_{j=0}^{m-1} N^j \right\|_F^{-1} \, \tau.
\end{equation}
This estimate explains how the conditioning of $R_{11}$ affects the
leading $k$ singular values of $R$. In particular, it can happen that
the SVD detects further almost kernel elements of $R$ that are not
found by the QR decomposition, which is another reason to prefer the
SVD to the simpler rank revealing factorizations.

It has to be mentioned that there are improved
pivoting strategies, described in \cite{chandrasekaran94}, but since
most of them require the computation of an SVD as an auxiliary tool, it
is more efficient to stick with the SVD. Note, however, that clearly
$$
\sigma_1 \ge \sigma_1 (R_{11} ) \ge \tau
$$
and that the interlacing property of singular values,
cf. \cite{GolubvanLoan96},  yields that, after thresholding, the
thresholded matrix $\hat R$ from \eqref{eq:ThreshatMatDef} satisfies
$$
\sigma_m ( \hat R ) \ge \sigma_k ( R_{11} ) > \sigma_{m+1} (\hat R) = \cdots
= \sigma_n ( \hat R) = 0.
$$
The Wielandt--Hoffman theorem for singular values,
\cite[Theorem~8.6.4]{GolubvanLoan96}, shows that we get a
reasonable approximation for the singular values of $\hat R=R+E$ for
some $E\in \RR^{n \times n}$:
\begin{equation}
  \label{eq:WielandtHoffman}
  \sum_{j=1}^n \left( \sigma_j (R) - \sigma_j (\hat R) \right)^2 \le \|
  R-\hat R \|_F^2 < \tau^2,  
\end{equation}
which immediately gives following result.

\begin{lem}\label{P:RankRobust}
  If
  $$
  \sigma_k ( R_{11} ) \ge \tau + \sigma
  $$
  then $\sigma_1 (\hat R) \ge \cdots \ge \sigma_k (\hat R) > \sigma$,
  that is, the rank of $\hat R$ is observed correctly relative to the
  threshold $\tau$.
\end{lem}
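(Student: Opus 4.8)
The plan is to combine the Wielandt--Hoffman estimate \eqref{eq:WielandtHoffman} with the interlacing property of singular values; no tool beyond those already assembled above is needed. First I would extract from \eqref{eq:WielandtHoffman} the single--index consequence: since every summand on the left is nonnegative, $\bigl(\sigma_j(R)-\sigma_j(\hat R)\bigr)^2 \le \sum_{i=1}^n \bigl(\sigma_i(R)-\sigma_i(\hat R)\bigr)^2 < \tau^2$ for each $j$, so $|\sigma_j(R)-\sigma_j(\hat R)| < \tau$; in particular $\sigma_k(\hat R) > \sigma_k(R) - \tau$. Here one uses that $\|R-\hat R\|_F < \tau$, which is precisely the right-hand inequality in \eqref{eq:WielandtHoffman} and was secured by the threshold choice \eqref{eq:ThresholdChoice} together with the Frobenius bound in the proof of Lemma~\ref{L:Rsigk+1Est}.

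Next I would invoke interlacing for submatrices: $R_{11}$ is the leading $m\times m$ principal submatrix of $R$ in the partition \eqref{eq:RprPart}, so deleting the last $n-m$ rows and columns can only decrease singular values, giving $\sigma_k(R) \ge \sigma_k(R_{11})$ (the same inequality already recorded as the left-hand part of \eqref{eq:RSigmaEst}, read with the cut index $m$ of \eqref{eq:RprPart}, so $k=m$, or any $k\le m$ via $\sigma_k(\hat R)\ge\sigma_m(\hat R)$). Combining this with the hypothesis $\sigma_k(R_{11}) \ge \tau+\sigma$ yields
\[
\sigma_k(\hat R) \;>\; \sigma_k(R) - \tau \;\ge\; \sigma_k(R_{11}) - \tau \;\ge\; (\tau+\sigma)-\tau \;=\; \sigma .
\]
Since the singular values are listed in nonincreasing order, $\sigma_1(\hat R)\ge\cdots\ge\sigma_k(\hat R)$ holds automatically, so all of $\sigma_1(\hat R),\dots,\sigma_k(\hat R)$ exceed $\sigma$; that is, the numerical rank of $\hat R$ relative to any cut-off $\le\tau+\sigma$ is at least $k$, so the thresholding step has not destroyed any genuine singular direction lying above the prescribed level.

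The argument is essentially the displayed two-line chain, so there is no real obstacle; the only points requiring care are keeping the index consistent with the partition \eqref{eq:RprPart}, as noted above, and confirming the perturbation bound $\|R-\hat R\|_F<\tau$, which is already part of \eqref{eq:WielandtHoffman}. I would also remark that interlacing alone gives an even shorter route, since $R_{11}$ is likewise the leading submatrix of $\hat R=\left[\begin{smallmatrix} R_{11} & * \\ 0 & 0\end{smallmatrix}\right]$ from \eqref{eq:ThreshatMatDef}, so $\sigma_k(\hat R)\ge\sigma_k(R_{11})\ge\tau+\sigma>\sigma$ directly; the Wielandt--Hoffman version is preferable here only because it simultaneously records that the kept singular values of $\hat R$ are $\tau$-close to those of $R$.
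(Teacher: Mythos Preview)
Your proof is correct and uses the same two ingredients as the paper---interlacing of singular values and the Wielandt--Hoffman bound \eqref{eq:WielandtHoffman}. The only cosmetic difference is that the paper's primary step is the direct interlacing $\sigma_k(\hat R)\ge\sigma_k(R_{11})$ (which you mention as your closing remark), whereas your main chain detours through $\sigma_k(R)$; both routes arrive at the same conclusion.
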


\begin{proof}
  Since $\sigma_k ( \hat R ) \ge \sigma_k ( R_{11} )$ and, by
  (\ref{eq:WielandtHoffman}),
  $$
  \left| \sigma_k ( \hat R ) - \sigma_k (R) \right| \le \left(
    \sum_{j=1}^n \left(\sigma_j (R) - \sigma_j ( \hat R ) \right)^2
  \right)^{1/2} \le \tau,
  $$
  we get that $\sigma_k (R') > \sigma$ as claimed.
\end{proof}

\noindent
The structure of the matrix
$$
R_{k+1} = \left[
  \begin{array}{cc}
    \Sigma_k' & * \\
    0 & \widetilde R
  \end{array}
\right]
$$
from (\ref{eq:Rk+1Def}) allows us to draw further conclusions on the
singular values of $R_{k+1}$ together with the inductive assumption
that $\Sigma_k'$ results from a thresholding process with threshold
level $\tau$ yielding that $( \Sigma_k'
)_{jj} \ge \tau$. Adding one column to $\Sigma_k'$ obtaining the
matrix
$$
S = \left[
  \begin{array}{cc}
    \Sigma_k' & * \\ 0 & *
  \end{array}
\right],
$$
the interlacing property of singular values yields $\sigma_{n_k'} (S)
\ge \sigma_{n_k'} ( \Sigma_k' ) \ge \sigma_{n_{k+1}'} (S)$, hence
$\sigma_k (S) \ge \tau$. By an inductive repetition of this argument
it follows that
$$
\sigma_{n_k'} ( R_{k+1} ) \ge \sigma_{n_k'} ( \Sigma_k' ) \ge \tau.
$$
This reasoning remains unchanged if we decompose $R_{k+1}$ according
to the thresholding strategy (\ref{eq:ThresholdChoice}) into
\begin{equation}
  \label{eq:Rk+1TriDecomp}
  R_{k+1} = \left[
    \begin{array}{cc}
      R_{11} & R_{12} \\ 0 & R_{22}
    \end{array}
  \right], \qquad R_{11} \in \RR^{m \times m}.
\end{equation}
Then $m \ge n_k'$ since all diagonals of $\Sigma_k'$ exceed
$\tau$. Since the above reasoning depends only on adding columns to
$\Sigma_k'$, we can draw the following conclusion.

\begin{lem}\label{P:ThreshNumerRank}
  The thresholded matrix
  $$
  \hat R_{k+1} := \left[
    \begin{array}{cc}
      R_{11} & R_{12} \\
      0 & 0 \\
    \end{array}
  \right]
  $$
  satisfies $\sigma_{n_k'} \left( \hat R_{k+1} \right) \ge \tau$.
\end{lem}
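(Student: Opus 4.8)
The plan is to reduce the statement directly to the chain of singular-value inequalities already assembled in the text just before the lemma. First I would recall the key structural fact established there: because the inductive hypothesis gives $(\Sigma_k')_{jj} \ge \tau$ for all diagonal entries of $\Sigma_k'$, and because $R_{k+1}$ is obtained from $\Sigma_k'$ by adjoining further columns (namely the block $\bigl[\begin{smallmatrix} Y'P^T \\ \widetilde R\end{smallmatrix}\bigr]$), the repeated application of the interlacing property yields $\sigma_{n_k'}(R_{k+1}) \ge \sigma_{n_k'}(\Sigma_k') \ge \tau$. I would then invoke the observation, made in the paragraph preceding the lemma, that in the thresholding partition \eqref{eq:Rk+1TriDecomp} one necessarily has $m \ge n_k'$, since every diagonal entry of $\Sigma_k'$ exceeds $\tau$ and the pivoting-induced ordering \eqref{eq:rDecrease} forces the indices of those large diagonal entries to lie within the leading $R_{11}$ block.

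Next I would apply the interlacing property of singular values once more, this time to the pair $\hat R_{k+1}$ and $R_{11}$. Since $R_{11}$ is the leading $m\times m$ principal submatrix of $\hat R_{k+1}$ (the off-diagonal block $R_{12}$ being retained and the block $R_{22}$ being zeroed), interlacing gives $\sigma_m(\hat R_{k+1}) \ge \sigma_m(R_{11})$, and more generally $\sigma_j(\hat R_{k+1}) \ge \sigma_j(R_{11})$ for $j \le m$. Because $m \ge n_k'$, in particular $\sigma_{n_k'}(\hat R_{k+1}) \ge \sigma_{n_k'}(R_{11})$. It therefore suffices to show $\sigma_{n_k'}(R_{11}) \ge \tau$.

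To close the argument I would note that $R_{11}$ itself is again a matrix obtained from $\Sigma_k'$ by adjoining columns: it is the leading block of $R_{k+1}$, and since $m \ge n_k'$ its first $n_k'$ columns contain $\Sigma_k'$ together with the truncated extra block, i.e., $R_{11}$ has the same ``$\Sigma_k'$ plus appended columns'' shape $\bigl[\begin{smallmatrix}\Sigma_k' & * \\ 0 & *\end{smallmatrix}\bigr]$ that was analyzed in the displayed reasoning for $S$ above. Hence exactly the same inductive use of interlacing — one column at a time, each step giving $\sigma_{n_k'}(\cdot) \ge \sigma_{n_k'}(\Sigma_k')$ — yields $\sigma_{n_k'}(R_{11}) \ge \sigma_{n_k'}(\Sigma_k') \ge \tau$. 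Combining this with the inequality from the previous paragraph gives $\sigma_{n_k'}(\hat R_{k+1}) \ge \sigma_{n_k'}(R_{11}) \ge \tau$, which is the assertion.

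The only genuinely delicate point — and the one I would be most careful to state cleanly — is the claim $m \ge n_k'$, i.e., that the thresholding cut in \eqref{eq:ThresholdChoice} never slices into the block coming from $\Sigma_k'$. This relies on two things simultaneously: that the diagonal of $\Sigma_k'$ sits above $\tau$ (the inductive hypothesis), and that the column-pivoted ordering \eqref{eq:rDescent}/\eqref{eq:rDecrease} of $R_{k+1}$ keeps these large pivots in the leading positions so that the stopping rule \eqref{eq:ThresholdChoice} cannot trigger before index $n_k'$. Everything else is a mechanical reuse of the interlacing inequalities already set up in the section, so once $m \ge n_k'$ is granted the proof is a two-line consequence.
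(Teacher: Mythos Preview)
Your argument is correct and follows the same route as the paper: the lemma is stated as an immediate consequence of the discussion preceding it, namely that $m \ge n_k'$ (so $\Sigma_k'$ sits intact in the upper-left corner of $\hat R_{k+1}$) together with the column-by-column interlacing argument for matrices of the shape $\bigl[\begin{smallmatrix}\Sigma_k' & *\\ 0 & *\end{smallmatrix}\bigr]$. Your detour through $R_{11}$ is harmless but unnecessary---the paper applies the ``adding columns to $\Sigma_k'$'' argument directly to $\hat R_{k+1}$ rather than first passing to $R_{11}$.

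One small correction to your closing paragraph: the reason $m \ge n_k'$ is \emph{not} that a global ordering \eqref{eq:rDecrease} holds for $R_{k+1}$---in fact the diagonal of $R_{k+1}$ need not be monotone across the boundary between the $\Sigma_k'$ block and the $\widetilde R$ block. What actually forces $m \ge n_k'$ is simply that the first $n_k'$ diagonal entries of $R_{k+1}$ are the entries of $\Sigma_k'$, each $\ge \tau$ by the inductive hypothesis, so the stopping criterion \eqref{eq:ThresholdChoice} cannot fire at any index $j < n_k'$ regardless of what happens later on the diagonal. The paper's one-line justification (``since all diagonals of $\Sigma_k'$ exceed $\tau$'') is already the complete argument.
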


\noindent
With this information at hand, we can fix our pivoting structure to
compute the matrix $\Sigma_{k+1}'$ in Algorithm~\ref{A:thresholdSVD}.

\begin{algorithm}
  \caption{SVD Thresholding}
  \label{A:thresholdSVD}
  \begin{algorithmic}[1]
  \STATE{\textbf{Given:} matrix $R_{k+1}$ of the form $\left[
    \begin{array}{cc}
      \Sigma & * \\ 0 & R
    \end{array}
  \right]$.}
  \STATE{Decompose $R_{k+1}$ according to
    (\ref{eq:Rk+1TriDecomp}) with the thresholding strategy
    (\ref{eq:ThresholdChoice}).}
  \STATE{Set $R_{22} = 0$.}
  \STATE{Compute the SVD
    $$
    R_{k+1}' = U \left[
      \begin{array}{cc}
        \Sigma & 0 \\
        0 & 0      
      \end{array}
    \right]
    V^T, \qquad \Sigma \in \RR^{m \times m},
    $$
    and truncate $\Sigma$ with threshold $\tau$. In other words, write
    $$
    \Sigma = \left[
      \begin{array}{cc}
        \Sigma_{k+1}' & 0 \\
        0 & D
      \end{array}
    \right],
    \qquad \Sigma_{k+1}' \in \RR^{n_{n+1}' \times n_{k+1}'},
    $$
    such that all diagonal elements of $\Sigma_{k+1}'$ are $\ge \tau$
    and all diagonal elements of $D$ are $< \tau$.}
  \end{algorithmic}
\end{algorithm}

\noindent
Due to the above arguments this strategy has a very important
property.

\begin{lem}\label{T:ThreshRank}
  The thresholding strategy is rank increasing, i.e., $r_{k+1} \ge r_k$.
\end{lem}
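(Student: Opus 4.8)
The plan is to show that the rank cannot decrease when passing from $A_k$ to $A_{k+1}$, using the structural description of $R_{k+1}$ and the thresholding lemmas already established. First I would recall from \eqref{eq:Rk+1Def} that $R_{k+1}$ has the block form
$$
R_{k+1} = \left[
  \begin{array}{cc}
    \Sigma_k' & * \\
    0 & \widetilde R
  \end{array}
\right],
$$
where $\Sigma_k' \in \RR^{r_k \times r_k}$ came from the previous thresholding step, so all its diagonal entries are $\ge \tau$; in particular $n_k' = r_k$ in the notation of the previous lemmas. The rank $r_{k+1}$ is determined in \eqref{eq:Rk+1SVD} and Algorithm~\ref{A:thresholdSVD} as the number of singular values of the thresholded matrix $\hat R_{k+1}$ that are $\ge \tau$ (equivalently, $>0$ after truncation).

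The key step is Lemma~\ref{P:ThreshNumerRank}, which states exactly that $\sigma_{n_k'}(\hat R_{k+1}) \ge \tau$, i.e.\ $\sigma_{r_k}(\hat R_{k+1}) \ge \tau$. Since the singular values are ordered decreasingly, this forces $\sigma_1(\hat R_{k+1}) \ge \cdots \ge \sigma_{r_k}(\hat R_{k+1}) \ge \tau$, so at least $r_k$ singular values of $\hat R_{k+1}$ survive the truncation at level $\tau$. By the definition of $r_{k+1}$ as the largest index $j$ with $\sigma_j \ge \tau$ (equivalently $\sigma_j>0$ after thresholding in step~\ref{it:AAugment2.4}), we conclude $r_{k+1} \ge r_k$, which is the claim.

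I would also handle the base case / degenerate situation briefly: if $A_k$ has rank $r_k = 0$ (or at the initialization stage) the inequality is trivial since ranks are nonnegative; and if $\tau = 0$ the statement reduces to the obvious fact that appending columns cannot decrease rank, which also follows from the block-triangular form since the $r_k$ nonzero diagonal entries of $\Sigma_k'$ persist. The main (and really only) obstacle is making sure the index bookkeeping matches: one must verify that the matrix fed to the SVD in \eqref{eq:Rk+1SVD} is precisely the thresholded $\hat R_{k+1}$ of Lemma~\ref{P:ThreshNumerRank} and that $n_k'$ there equals the current rank $r_k$ (guaranteed by the inductive hypothesis that $\Sigma_k'$ itself arose from a thresholding step with the same $\tau$, so all its diagonals exceed $\tau$). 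Once that identification is in place, the result is an immediate corollary of Lemma~\ref{P:ThreshNumerRank}, so the proof is short.
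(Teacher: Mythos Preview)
Your proposal is correct and follows essentially the same route as the paper: the paper does not write out a separate proof but presents Lemma~\ref{T:ThreshRank} as an immediate consequence of the preceding discussion, in particular of Lemma~\ref{P:ThreshNumerRank}, which is exactly the key step you invoke. Your identification $n_k' = r_k$ and the remaining bookkeeping are precisely what the paper's phrase ``due to the above arguments'' leaves implicit.
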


\begin{rem}\label{R:SVDEffort}
  Computing the SVD of the upper triangular matrix can be done in $O \left(
    (r_k+m_k)^3 \right)$ operations, see the comments on the R--SVD in
  \cite[Chapter~5.4]{GolubvanLoan96}.
\end{rem}

\section{Applications and numerical experiments}
\label{sec:applications}
To motivate and justify the development of the methods in the
preceding sections, we finally point out two main applications where
they turn out to be useful.
Algorithm~\ref{A:Augment} has been implemented prototypically in
\texttt{octave}
\cite{eaton09:_gnu_octav}. The code can be downloaded for checking and
verification from
\begin{center}
  \small
  \verb!www.fim.uni-passau.de/digitale-bildverarbeitung/forschung/!
  \verb!downloads!  
\end{center}
All tests and experiments in the following section refer to this software.

\subsection{Absolute and relative thresholding}
\label{sec:Thresh}

Before we describe a simple experiment and the applications, we must
make clear that the thresholding and thus rank detection strategy
indicated by Lemma~\ref{T:ThreshRank} is not the usual numerical rank
detection strategy as used, for example, by the \texttt{rank} command
in \texttt{octave}. There a singular value $\sigma$ of $A \in \RR^{m
  \times n}$, $m\ge n$, is thresholded to zero if $\sigma \le  m \sigma_1 u$,
where $u$ is the \emph{unit roundoff} that describes the numerical
accuracy, cf. \cite{Higham02}, and $\sigma_1 = \| A \|_2$ is the
largest singular value of $A$. Though this strategy is the only reliable
general purpose rank detection one, especially since it is
independent of normalization, there is a phenomenon that particularly
affects
the two applications below: the more the matrix grows, the larger the
threshold level becomes and more and more singular values will be
thresholded to zero. A direct application of the \texttt{rank} command in
the Prony algorithm of \cite{Sauer17_Prony} even gave
\emph{decreasing} ranks for augmented matrices sometimes. Another
simple example would be video analysis: imagine that a still image,
even a normalized one, is transmitted over a fairly long period, say
$N$ times. Then
the respective singular value will be $\sqrt{N}$, where $N$ denotes
the number of repetitions, hence the threshold level will grow at least like
$N^{3/2}$ and may become so large that standard rank
methods will ignore almost any frame, even if it is significantly
different. On the other, the computation of the matrix $R$ in
Algorithm~\ref{A:thresholdSVD} depends on the singular vectors only,
not on the singular \emph{values} accumulated so far, hence the roundoff
errors affecting this matrix would still be independent of $N$ and an
absolute threshold will reliably detect the difference, in contrast to
a relative one.
Taking into account that, by the Wielandt--Hoffman theorem
for singular values, cf. \cite[Theorem~8.6.4]{GolubvanLoan96},
$$
\sum_{j=1}^r \left( \sigma_j (A') - \sigma_j (A) \right)^2 \le \| A -
A' \|_F^2
$$
perturbations of a matrix only affect the singular values in an
additive way, it makes sense, especially in the applications below, to
use methods that work with an \emph{absolute} thresholding that
sets singular values to zero if they fall below a certain absolute
value $\tau$. This is the threshold strategy developed in the
preceding section.




\subsection{Video analysis}
\label{sec:vide}
The first application is the computation
of \emph{principal components} for sequential data. Principal
Component Analysis is a classical and frequently used technique in
signal analysis and (unsupervised) machine learning,
cf. \cite{hastie09:_elemen_statis_learn}, and essentially consists of
finding the best low dimensional approximation to \emph{feature
  vectors} $y_k \in \RR^d$, $k=1,2,\dots,n$. Arranging these features
into a matrix $Y \in \RR^{d \times n}$, the best low dimensional
approximation with respect to the Euclidean norm corresponds to
finding a matrix $X \in \RR^{d \times n}$ of rank, say $n' < n$, such
that the \emph{Frobenius norm} $\| Y - X \|_F$ is minimized. This
matrix, on the other hand, is obtained by choosing the first $n'$
columns of $U$ in the SVD $Y = U \Sigma V^T$. Especially in imaging
applications, where the features can be the pixel values, color or
greyscale, of an image, $d$ can be large and handling or processing
the full matrix $Y$ is difficult. Moreover, the features may not all
be present at the beginning and storing them first will also cause
complications.

Moreover, since the noise level in video images exceeds the unit
roundoff error by orders of magnitude, a relatively high absolute
threshold $\tau$ in the SVD update is possible and also adds a
denoising effect to the computations.

As an example application, we consider PCA analysis to detect moving
objects in videos of fixed view cameras. To that end, we consider
a set of 594
greyscale images from a webcam viewing the city hall of Passau  on
February 7, 2016. These images are handled in full $640 \times 480$
resolution, yielding a matrix $A \in \RR^{307200 \times 594}$ of rank
$578$ which were read in chunks of $30$ images. The \texttt{octave}
implementation of the algorithm worked out of the box with a rate of
about 7 frames per second.
From the Householder representation, the $p$th
singular vectors can be easily and efficiently computed as
$$
\left[
  \begin{array}{cc}
    \widetilde U & 0 \\
    0 & I 
  \end{array}
\right] \prod_{j=1 \uparrow r_k} ( I - h_j h_j^T ) \left[
  \begin{array}{c}
    e_p \\ 0
  \end{array}
\right], \qquad p=1,\dots,r_k.
$$
As a simple example, Fig.~\ref{fig:SingVecs} shows the
dominant singular vector and the fairly irrelevant $473$rd singular
vector of the video sequence.
\begin{figure}
  \centering
  \includegraphics[width=.46\hsize]{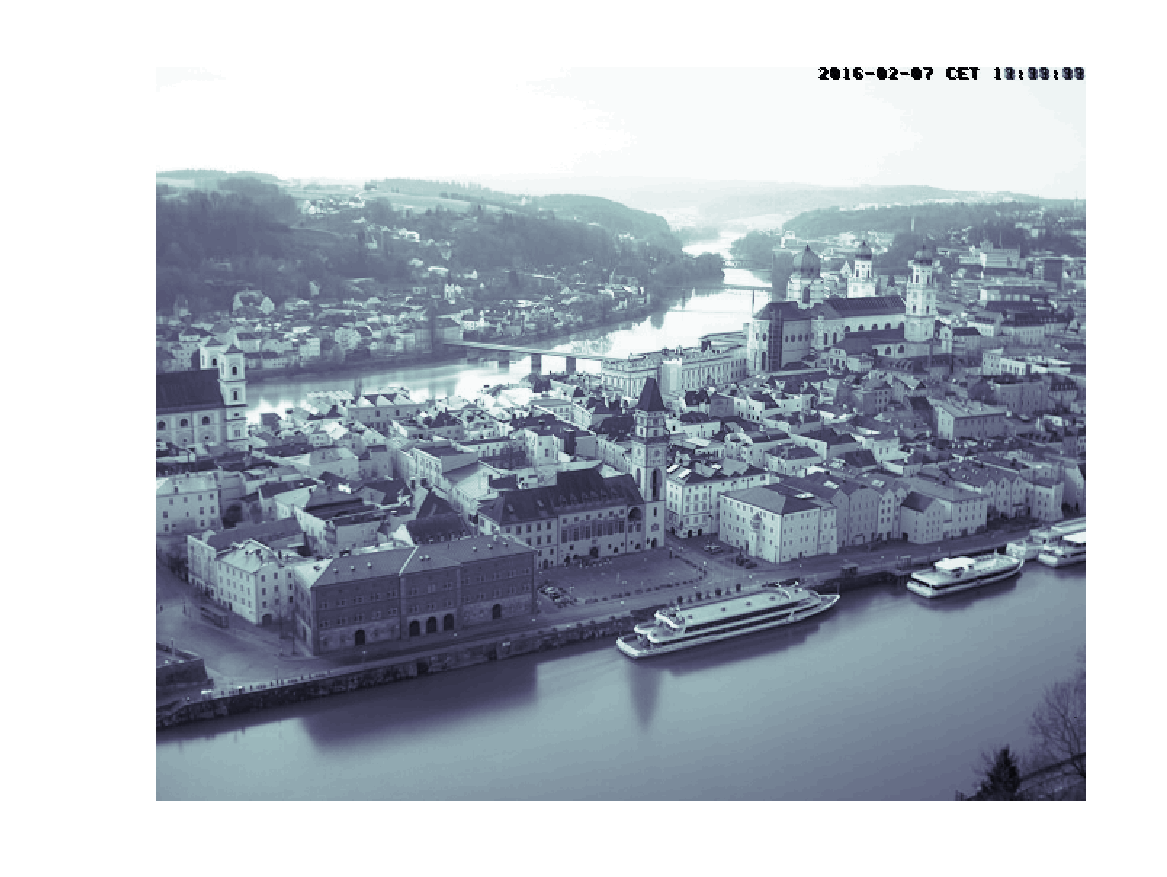}\qquad
  \includegraphics[width=.46\hsize]{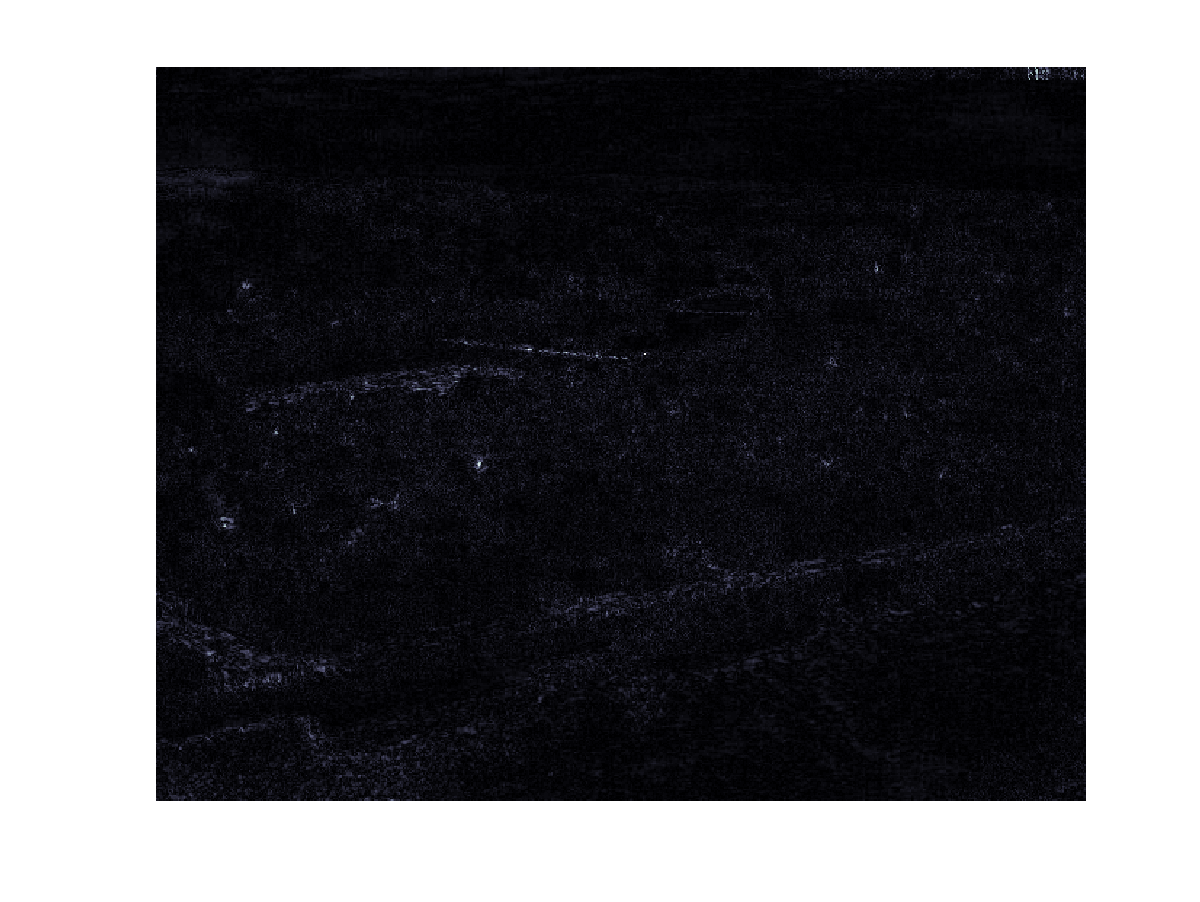}
  \caption{First and $473$rd singular vector of the image sequence}
  \label{fig:SingVecs}
\end{figure}

\begin{figure}
  \centering
  \includegraphics[width=.46\hsize]{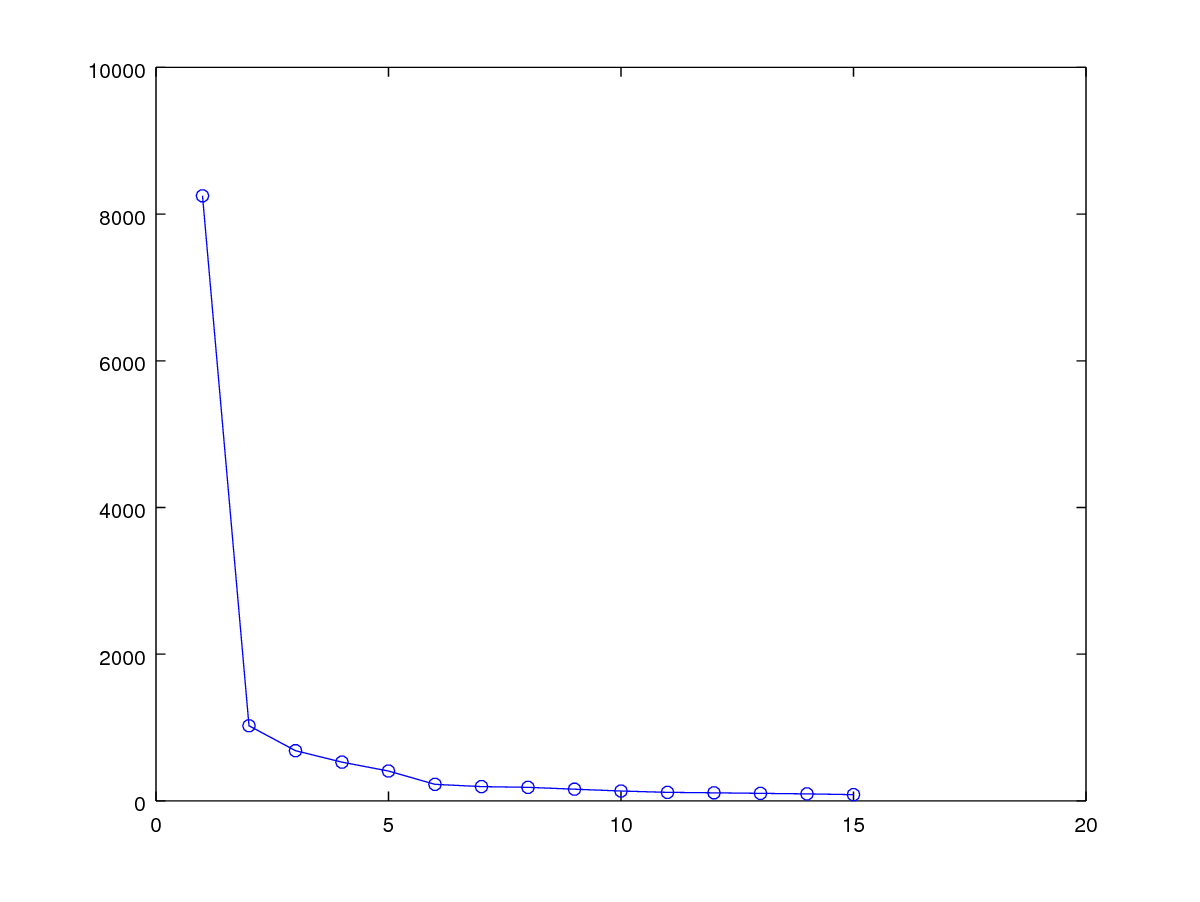}\qquad
  \includegraphics[width=.46\hsize]{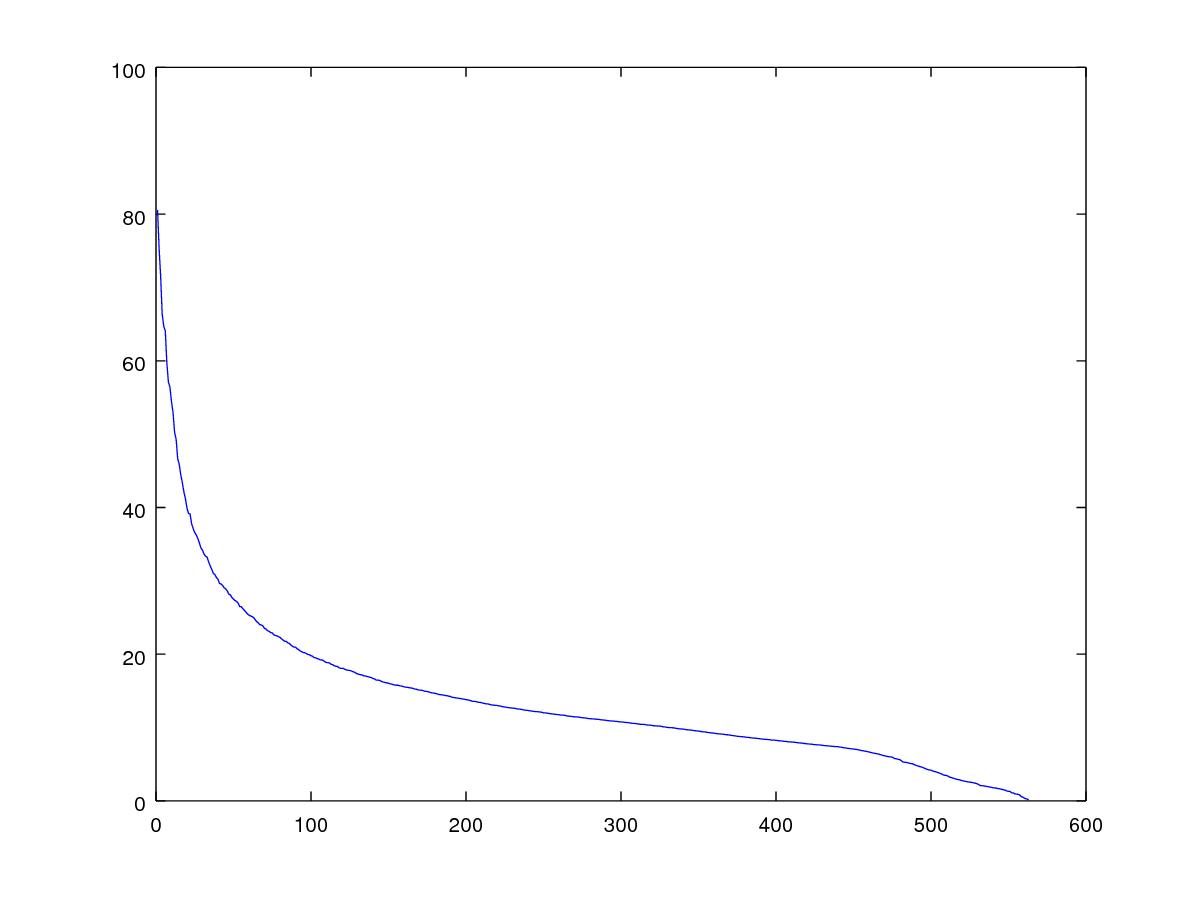}
  \caption{The first $15$ singular values \emph{(left)} of the image
    and the remaining ones \emph{(right)}. Note the scaling of the $y$--axis.}
  \label{fig:SingVals}
\end{figure}

Indeed, the singular values decay rapidly. This can be seen in
Fig.~\ref{fig:SingVals}, where we have decomposed the singular values
into the dominant 15 and the remaining ones. The L-shape of this curve
suggests to cut down to about $20$ singular values and to decompose
the sequence by projecting on the
first $20$ and on the remaining singular vectors. This essentially
removes moving objects from the frames but still maintains more
persistent features like shadows and illumination of the scenery which
change over time, but in a slower and more persistent way. We
show two example frames in Fig.~\ref{fig:BoatBus} and
Fig.~\ref{fig:Ripples}, where the top left image is the original frame
that is decomposed into a ``still'' image and an image with the
``moving'' parts.  Note that the advantage of our algorithm is that,
in contrast to methods like
\cite{rodriguez16:_increm_princ_compon_pursuit_video_backg_model}
which is based on \cite{baker12:_low_rank}, it
allows to compute the projection of the frames to an arbitrary number
of singular vectors, once the video is learned properly. Note that the
number of \emph{relevant} singular values can usually only be detected
once the SVD is computed.

The original frames and video with the decomposition can also be
downloaded for verification from the address given above.

\begin{figure}
  \centering
  \includegraphics[width=.93\hsize]{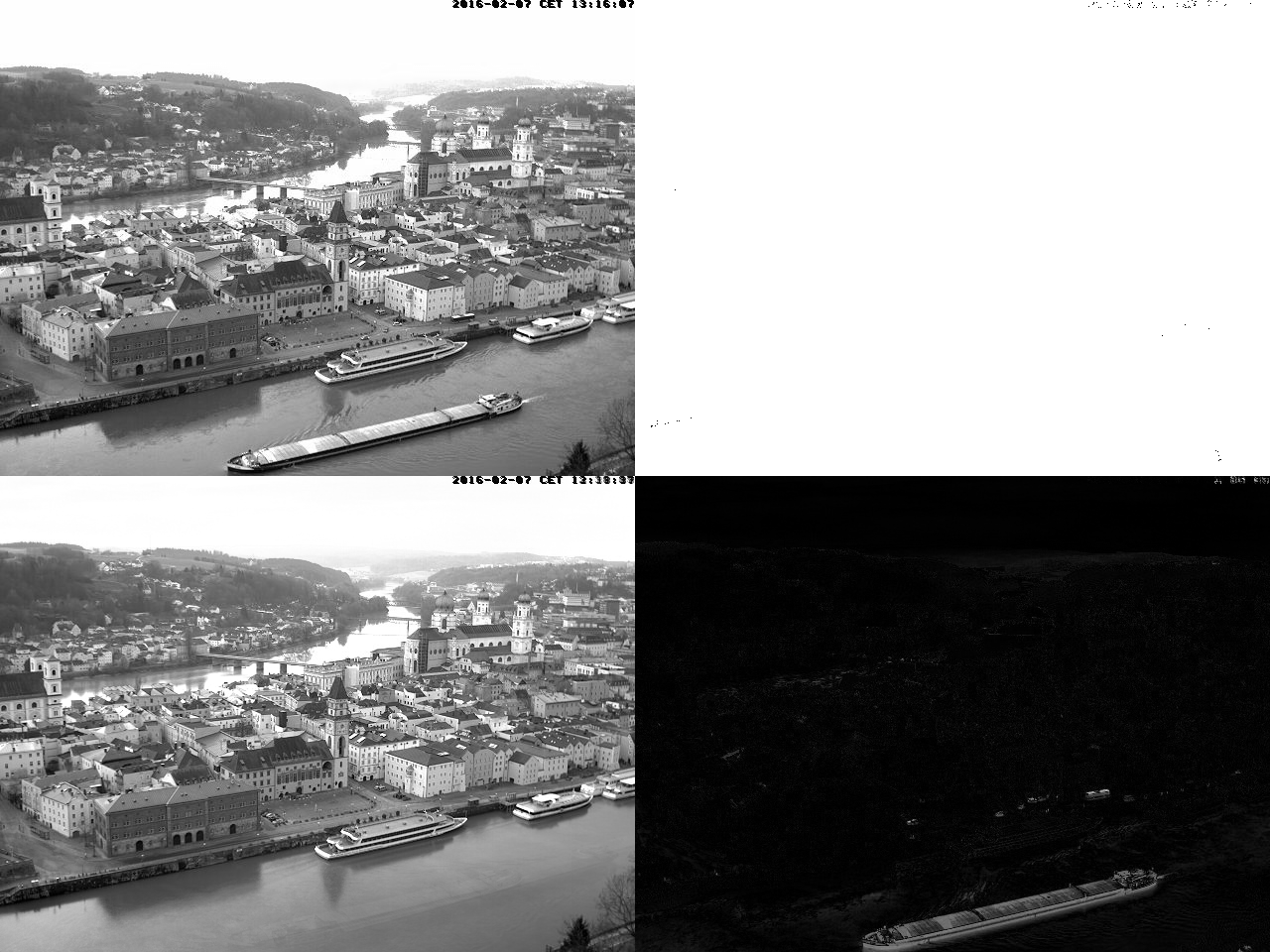}
  \caption{Frame \#351, decomposition: Detection of large (boat) and
    small (bus) moving features at the same time.}
  \label{fig:BoatBus}
\end{figure}

\begin{figure}
  \centering
  \includegraphics[width=.93\hsize]{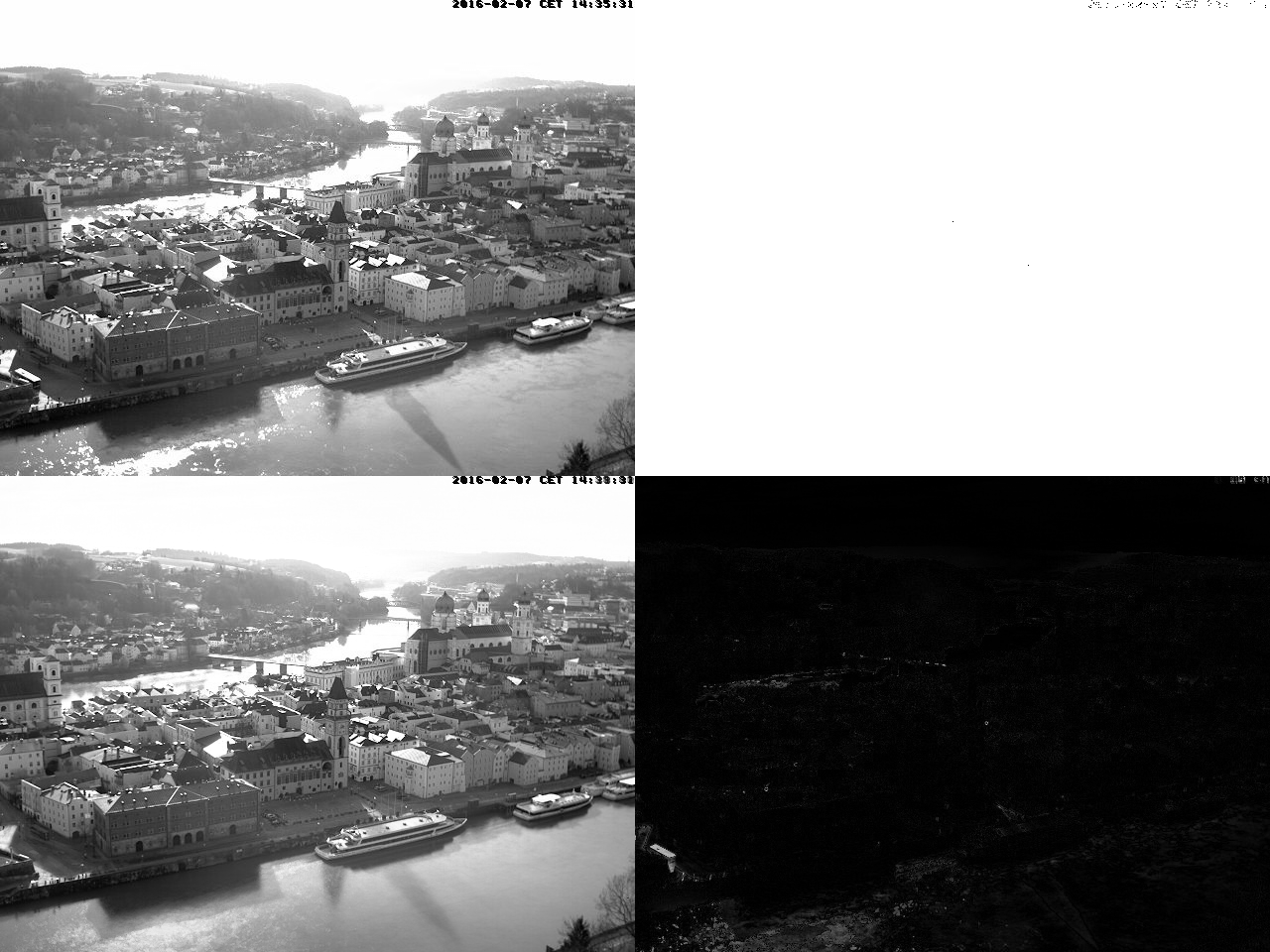}
  \caption{Frame \#507, decomposition: Moving objects (bus) and light
    effects on the water caused by the afternoon sun. Note that the
    shadows are still reproduced in the ``still'' image.} 
  \label{fig:Ripples}
\end{figure}

\subsection{Prony's problem in several variables}
\label{sec:prony}
The main motivation for Algorithm~\ref{A:Augment},
however, was the multivariate version of 
Prony's method \cite{prony95:_essai} which has attracted
some interest recently; besides being
interesting by itself, it is the main mathematical problem behind the
\emph{superresolution} concept from \cite{candes12:_towar}, which in
turn is motivated by studying point spread functions from
microscopy. In a nutshell, \emph{Prony's problem} can be described as
follows: given a function in $s$ variables of the form
\begin{equation}
  \label{eq:PronyProb}
  f (x) = \sum_{\omega \in \Omega} f_\omega \, e^{\omega^T x}, \qquad
  f_\omega \in \CC, \quad \Omega \subset \left( \RR + i ( \RR / 2 \pi
    \ZZ ) \right)^s, \,
  \# \Omega < \infty,  
\end{equation}
recover the unknown \emph{frequencies} $\omega$ from the finite set
$\Omega$ as well as the coefficients $f_\omega$, $\omega \in \Omega$,
from integer samples of $f$, i.e., from $f(A)$, $A \subset
\ZZ^s$. Note that the restriction on the imaginary part of the
frequencies is required to make the solution unique and the problem
well--defined. The main assumption made when solving this problem is
\emph{sparsity}, which means that $\# \Omega$ is small while no other
assumptions on $\Omega$ are necessary, though of course the
conditioning of the problem will depend on the geometry of $\Omega$.

Though determining $\Omega$ is a nonlinear problem, it can be
approached 
by methods from Numerical Linear Algebra. As pointed out in
\cite{Sauer17_Prony,Sauer18:_prony}, the \emph{Hankel matrices}
\begin{equation}
  \label{eq:PronyHankel}
  F_{A,B} := \left[ f( \alpha - \beta) :
    \begin{array}{c}
      \alpha \in A \\ \beta \in B
    \end{array}
  \right], \qquad A,B \subset \NN_0^s,
\end{equation}
provide all information about the ideal
\begin{equation}
  \label{eq:IOmegaDef}
  I_\Omega := \left\{ p \in \CC [x_1,\dots,x_s] : p( e^\omega ) = 0, \,
    \omega \in \Omega \right\}, 
\end{equation}
provided that $A$ and $B$ are sufficiently rich. Once a basis for
$I_\Omega$ is determined, the common zeros and therefore the
frequencies can be determined by methods from Computer Algebra. In
particular, if $A$ is such that the monomials $x^\alpha$, $\alpha \in
A$ admit interpolation at $e^\Omega := \{ e^\omega : \omega \in \Omega
\} \subset \CC^s$, then a polynomial belongs to $I_\Omega$ if and only
if its coefficient vector is in the kernel of $F_{A,B}$. By increasing
$B$ in a proper way, one can so construct Gr\"obner or H--bases for
$I_\Omega$ with which the computation of the frequencies is reduced to
an eigenvalue problem.
The main observation from \cite{Sauer17_Prony,Sauer18:_prony} is now
as follows.

\begin{theorem}\label{T:PronyIdeal}
  If $A \subset \NN_0^s$ is sufficiently rich in the sense that
  $$
  \Pi = I_\Omega + \mbox{\textrm{span}\,} \{ (\cdot)^\alpha : \alpha
  \in A \},
  $$
  then, with $F_{A,B}$ as in (\ref{eq:PronyHankel}),
  \begin{enumerate}
  \item $[ p_\alpha : \alpha \in B ] \in \ker F_{A,B}$ if and only if
    $p(x) = \sum p_\alpha x^\alpha \in I_\Omega$.
  \item if $\rank F_{A,\Gamma_n} = \rank F_{A,\Gamma_{n+1}}$, then
    $\ker F_{A,\Gamma_n} (x)$ is a basis of $I_\Omega$, where
    $\Gamma_n := \{ \alpha \in \NN_0^s : |\alpha| \le n \}$.
  \end{enumerate}
\end{theorem}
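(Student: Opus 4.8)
The plan is to prove Theorem~\ref{T:PronyIdeal} by exploiting the structure of the Hankel matrices $F_{A,B}$ together with the richness hypothesis on $A$. The key is a factorization of $F_{A,B}$ that separates the polynomial-in-$B$ part from the polynomial-in-$A$ part through the exponential functions $e^\omega$. Writing $V_\Omega^A := \left[ e^{\omega^T \alpha} : \alpha \in A, \, \omega \in \Omega \right]$ and similarly $V_\Omega^B$, and using $f(\alpha - \beta) = \sum_{\omega \in \Omega} f_\omega \, e^{\omega^T \alpha} \, e^{-\omega^T \beta}$, one obtains
\begin{equation*}
  F_{A,B} = V_\Omega^A \, \mathrm{diag}( f_\omega : \omega \in \Omega ) \, (V_\Omega^B)^T .
\end{equation*}
The richness hypothesis $\Pi = I_\Omega + \mathrm{span}\{ (\cdot)^\alpha : \alpha \in A \}$ is exactly the statement that the monomials indexed by $A$ span a complement of $I_\Omega$ in the polynomial ring modulo $I_\Omega$; since $I_\Omega$ has codimension $\#\Omega$ (the points $e^\Omega$ being distinct), this forces $V_\Omega^A$ to have full column rank $\#\Omega$, i.e., the map $p \mapsto (p(e^\omega))_{\omega \in \Omega}$ restricted to $\mathrm{span}\{x^\alpha : \alpha \in A\}$ is injective and hence bijective onto $\CC^\Omega$.

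\textbf{Part 1.} For the first claim, I would argue that a coefficient vector $(p_\alpha)_{\alpha \in B}$ lies in $\ker F_{A,B}$ if and only if $\mathrm{diag}(f_\omega) (V_\Omega^B)^T (p_\alpha) = 0$, using that $V_\Omega^A$ has trivial left null space by the richness argument above. Since the $f_\omega$ are nonzero, this is equivalent to $(V_\Omega^B)^T (p_\alpha) = 0$, which says precisely that $\sum_{\alpha \in B} p_\alpha e^{\omega^T \alpha} = p(e^\omega) = 0$ for every $\omega \in \Omega$, i.e., $p \in I_\Omega$. (One subtlety: this requires $B$ to contain enough exponents to actually represent polynomials in $I_\Omega$ as coefficient vectors supported on $B$ — but the statement is an "if and only if" about coefficient vectors supported on $B$, so this is automatic.)

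\textbf{Part 2.} For the second claim, I would first record the general monotonicity $\ker F_{A,\Gamma_n} \hookrightarrow \ker F_{A,\Gamma_{n+1}}$ under the natural embedding of coefficient vectors (padding with zeros), which combined with the rank equality and Part~1 gives $I_\Omega \cap \Pi_n = I_\Omega \cap \Pi_{n+1}$ where $\Pi_n = \mathrm{span}\{x^\alpha : |\alpha| \le n\}$. The crux is then a degree-stability argument: if the intersection of $I_\Omega$ with total-degree-$\le n$ polynomials stops growing when passing to degree $n+1$, it never grows again, so $I_\Omega \cap \Pi_n$ already generates $I_\Omega$ as an ideal. This is the standard "if a Gröbner/H-basis computation stabilizes for one degree it has terminated" phenomenon, and **the main obstacle** is making this precise: one must show that $I_\Omega \cap \Pi_n = I_\Omega \cap \Pi_{n+1}$ implies $I_\Omega \cap \Pi_{n+1} = I_\Omega \cap \Pi_{n+2}$, for which I would multiply a hypothetical new degree-$(n+2)$ element by reducing modulo lower-degree ideal elements, using that every polynomial reduces modulo $\{x^\alpha : \alpha \in A\}$ into $I_\Omega$ by the richness hypothesis — so a degree-$(n+2)$ ideal element, after subtracting an $A$-combination, would have to already lie in $I_\Omega$ at lower degree. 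The details here are exactly those of the H-basis termination criterion from \cite{Sauer17_Prony,Sauer18:_prony}, and I would cite that work for the full combinatorial argument rather than reproduce it, since it is where the real content of Part~2 resides.
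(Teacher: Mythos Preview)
The paper does not actually prove Theorem~\ref{T:PronyIdeal}: it is stated without proof as ``the main observation from \cite{Sauer17_Prony,Sauer18:_prony}'', and the text immediately moves on to algorithmic consequences. So there is no in-paper proof to compare against.

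That said, your sketch is the standard route taken in those references: the Vandermonde-type factorization $F_{A,B} = V_\Omega^A \, \diag(f_\omega)\, (V_\Omega^B)^T$ together with the richness hypothesis giving injectivity of $V_\Omega^A$ is exactly how Part~1 is handled there, and your degree-stabilization argument for Part~2 is the H-basis termination criterion you already attribute to \cite{Sauer17_Prony,Sauer18:_prony}. One small point worth tightening in Part~2: the equality $\rank F_{A,\Gamma_n} = \rank F_{A,\Gamma_{n+1}}$ does not by itself give $I_\Omega \cap \Pi_n = I_\Omega \cap \Pi_{n+1}$ via the kernel embedding, since the kernels have different ambient dimensions; what it actually gives (using Part~1 and the fact that the rank equals $\#\Omega$ once it stabilizes) is that every homogeneous component of $I_\Omega$ of degree $n+1$ is already generated in degree $\le n$, which is the input the H-basis argument needs. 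With that adjustment your outline matches the cited proofs.
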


\noindent
These two observations suggest the algorithm to solve Prony's problem:
first find a ``good'' set $A$ and then build successively the matrices
$$
F_{A,\Gamma_0}, F_{A,\Gamma_1},\dots,  F_{A,\Gamma_n},
F_{A,\Gamma_{n+1}}, \dots 
$$
until $\rank F_{A,\Gamma_n} = \rank F_{A,\Gamma_{n+1}}$. If
$$
F_{A,\Gamma_k} = U \Sigma V^T,
$$
then the components of $V$ belonging to zero singular values are, by
Theorem~\ref{T:PronyIdeal}, coefficient vectors of polynomials from
the ideal, and once the rank stabilizes, a basis of the ideal has been
found from which the set $\Omega$ can be computed. Hence, in contrast
to the PCA application before, where the singular vectors in $U$ were
of importance, we are now interested in the matrix $V$ and it's
capability to distinguish between the kernel of $F_{A,\Gamma_k}$ and
its orthogonal complement.

There is one major drawback, however: in several variables, the
geometry of $e^\Omega$ becomes increasingly relevant and usually, only
$\# \Omega$ or an upper bound for it are assumed to be known. The
smallest known choice for $A$ that works unconditionally without any
further assumptions on $\Omega$ has cardinality $\# \Omega \, \left(
  \log \# \Omega \right)^{s-1}$ which still grows quite fast for large
$s$. Moreover, it is known to be beneficial to oversample, i.e., to
choose $A$ larger than needed, cf. \cite{Batenkov17P}. Thus,
Algorithm~\ref{A:Augment} addresses the two main issues here: how to
handle large columns in a still efficient way and how to ensure that
the rank is controlled well.

\section*{Acknowledgement}
We want to thank the referee for the very critical but constructive
report that significantly improved the paper and helped us a lot to
clarify the main points of this method. This was exceptionally helpful.

\section*{References}


\end{document}